\newcommand{\TheAuthors}{N. Alger, U. Villa, T. Bui-Thanh, and O. Ghattas}
\newcommand{\ShortTitle}{A data scalable KKT preconditioner}
\newcommand{\LongTitle}{A data scalable augmented Lagrangian KKT
  preconditioner for large scale inverse problems}
\newcommand{\nor}[1]{\left\| #1 \right\|}
\newcommand{\cond}[1]{\operatorname{cond}\left( #1 \right)}
\newcommand{\concrete}[1]{\boldsymbol{\mathbf{#1}}}
\newcommand{\adjoint}{\eta}
\newcommand{\parametermode}{\phi}
\newcommand{\parametereigenvalue}{d}
\newcommand{\basisoperator}{\Theta}
\newcommand{\noisevector}{\zeta}
\newcommand{\reglower}{\mu}
\newcommand{\regupper}{\nu}
\newcommand{\errover}{e_q}
\newcommand{\errunder}{e_\noisevector}
\newcommand{\ind}{k}
\newcommand{\regassumption}{Assumption \ref{condition:appropriate_regularization}}
\newcommand{\nobs}{n_\text{obs}}
\newcommand{\qpre}{p}
\newcommand{\upre}{v}
\newcommand{\etapre}{\xi}
\newcommand{\xpre}{z}
\headers{\ShortTitle}{\TheAuthors}
\title{{\LongTitle}\thanks{This work was funded by DOE grants
    DE-SC0010518 and DE-SC0009286, AFOSR grant FA9550-12-1-0484, and NSF grant CBET-1508713}}
\author{
  Nick Alger\thanks{Institute for Computational Engineering and Sciences, The University of Texas at Austin, Austin, TX
    (\email{nalger@ices.utexas.edu}, \email{uvilla@ices.utexas.edu}, \email{omar@ices.utexas.edu}).}
  \and
  Umberto Villa\footnotemark[2]
  \and
  Tan Bui-Thanh\thanks{Department of Aerospace Engineering and Engineering Mechanics, and Institute for Computational Engineering and Sciences, The University of Texas at Austin, Austin, TX
    (\email{tanbui@ices.utexas.edu}).}
  \and
  Omar Ghattas\footnotemark[2]
}
\begin{document}

\maketitle

\begin{abstract}
Current state of the art preconditioners for the reduced Hessian and the Karush-Kuhn-Tucker (KKT) operator for large
scale inverse problems are typically based on approximating the
reduced Hessian with the regularization operator. However, the quality
of this approximation degrades with increasingly informative 
observations or data. Thus the best case scenario from a scientific standpoint (fully
informative data) is the worse case scenario from a computational
perspective. In this paper we present an augmented Lagrangian-type
preconditioner based on a block diagonal approximation of the
augmented upper left block of the KKT operator. The preconditioner
requires solvers for two 
linear subproblems that arise in the augmented KKT operator, which we
expect to be much easier to precondition than the reduced Hessian.
Analysis of the spectrum of the preconditioned KKT operator indicates
that the preconditioner is effective when the regularization is chosen
appropriately. In particular, it is effective when the regularization
does not over-penalize highly informed parameter modes and does not
under-penalize uninformed modes. Finally, we present a numerical study
for a large data/low noise Poisson source inversion problem,
demonstrating the effectiveness of the preconditioner. In this
example, three MINRES iterations on the KKT system with our
preconditioner results in a reconstruction with better accuracy than
50 iterations of CG on the reduced Hessian system with regularization
preconditioning.

\end{abstract}

\begin{keywords}
PDE constrained inverse problems, data scalability, augmented
Lagrangian, preconditioning, KKT matrix, Krylov subspace methods
\end{keywords}

\begin{AMS}
  65J22, 49K20, 65F08, 65N21, 65F22, 65K10
\end{AMS}

\section{Introduction}
Here we focus on data scalable Karush-Kuhn-Tucker (KKT) preconditioners
for large-scale linear\footnote{The preconditioner presented here is
  also applicable to nonlinear inverse problems, which give rise to
  linear systems of the form we address here at each iteration of a
  (Gauss) Newton or sequential quadratic programming method.}  inverse
problems in which one seeks to reconstruct a parameter field from
observations of an associated state variable. Specifically, suppose we
have observations $y$ of a state variable $u$ that have been corrupted
by some noise $\noisevector$,
\begin{equation}
\label{eqn:obs_with_noise}
y=Bu + \noisevector,
\end{equation}
where $B$ is a linear operator encoding the action of the observation
process (i.e., the observation operator). Further, let the state variable
$u$ depend on a parameter $q$ through a linear state equation,
\begin{equation}
\label{eqn:state_equation}
Tq + Au = f,
\end{equation}
where $A$ is the forward operator, $T$ maps the
  parameter to the residual of the state equation, and $f$ is the
  known right hand side source. We seek to reconstruct the parameter
$q$ based on the observations $y$.
Under an independent and identically distributed (i.i.d.)
Gaussian noise model,\footnote{The least squares formulation in \eqref{eqn:linear_optimization_problem} also applies to general (non-i.i.d.) Gaussian noise models after re-weighting the data misfit equation \eqref{eqn:obs_with_noise} by the inverse square root of the noise covariance.} this inverse problem naturally leads to a least squares
optimization problem of the following form,
\begin{equation}
\label{eqn:linear_optimization_problem}
\begin{aligned}
\min_{q,u} &\quad \frac{1}{2}\nor{Bu-y}^2 + \frac{\alpha}{2}\nor{Rq}^2 \\
\text{such that} &\quad Tq + Au = f,
\end{aligned}
\end{equation}
where $R$ is a suitably chosen regularization operator and $\alpha$ is
a regularization parameter. We focus on the computationally difficult
case in which the observations $y$
are highly informative about the parameter $q$, and the regularization
parameter $\alpha$ is correspondingly small. The KKT system expressing
the first order necessary condition for an optimal solution of 
\eqref{eqn:linear_optimization_problem} is
\begin{equation}
\label{eqn:KKT_matrix_full}
\underbrace{\begin{bmatrix}\alpha R^*R & & T^* \\
& B^*B & A^* \\
T & A
\end{bmatrix}}_{K} \begin{bmatrix}q \\ u \\ \adjoint \end{bmatrix} = \begin{bmatrix}0 \\ B^* y \\ f\end{bmatrix}.
\end{equation}
Here all operators are linear, possibly unbounded,
maps between suitable Hilbert spaces. The symbols $A^*,B^*, R^*$, and $T^*$ denote the adjoints (in the Hilbert space sense) of $A,B,R$, and $T$, respectively, and $\adjoint$ denotes the adjoint variable (or Lagrange
multiplier) for the state equation. More details regarding our setting
and notation are provided in Section \ref{sec:notation}. We denote
the KKT operator by $K$. For large-scale problems, direct
factorization of $K$ is not computationally feasible due to both
memory and CPU time considerations, and iterative methods must be
used. Krylov methods, MINRES \cite{PaigeSaunders75} in particular, are
the gold standard for iteratively solving this kind of large scale
symmetric indefinite system. The performance of the MINRES method
strongly depends on the clustering of the spectrum of the
preconditioned operator, the more clustered the better
\cite{WathenFischerSilvester95}. In this paper we propose clustering
the spectrum of the KKT operator by using the following block diagonal
preconditioner,
\begin{equation}
\label{eqn:preconditioner}
P:=\begin{bmatrix}
\alpha R^*R + \rho T^*T \\
& B^*B + \rho A^*A \\
& & \frac{1}{\rho} I
\end{bmatrix},
\end{equation}
where $I$ denotes the identity map associated with the appropriate
inner product (in the computations, a mass matrix). We further propose
choosing $\rho = \sqrt{\alpha}$ based on theoretical results and
numerical evidence. In our theory and numerical experiments we assume
that $A$ and $R$ are invertible maps. Although the application of
preconditioner \eqref{eqn:preconditioner} and the abstract theory we
present in Section \ref{sec:theory} do not depend on invertibility of
$T$, much of the intuition behind the assumptions of the theory is
lacking in the case where $T$ is non-invertible. Remedies for
this case are the subject of ongoing research. 
While existing data scalable KKT preconditioners usually require
regularization operators $R$ that are spectrally equivalent to the
identity,\footnote{A review of existing work is presented in Section
  \ref{sec:review_of_existing_work}. In particular, see Sections
  \ref{sec:adjoint_schur} and \ref{sec:block_scaling}.} our
preconditioner \eqref{eqn:preconditioner} performs well even if $R$ is
a discretization of an unbounded operator (e.g., Laplacian
regularization).

\subsection{Overview of results}

In Section \ref{sec:cond_bound} we prove that, using our
preconditioner \eqref{eqn:preconditioner}, the symmetrically
preconditioned KKT operator satisfies the  condition number
bound 
\begin{equation*}
\cond{P^{-1/2} K P^{-1/2}} \le \frac{3}{(1-\beta)\delta}
\,\, ,
\end{equation*}
where $\delta$ and $\beta$ are bounds on the eigenvalues of the
arithmetic and geometric means of certain damped projectors.\footnote{The condition number provides an upper bound on the required number of MINRES iterations. An even sharper bound could by obtained by characterizing all four extreme eigenvalues (endpoints of the positive and negative intervals in which the eigenvalues reside) of the preconditioned system \cite{ElmanSilvesterWathen14}.} 
Based on
the nature of the damped projectors, we expect these eigenvalue bounds
to be satisfied with good constants $\delta$ and $\beta$ for inverse
problems that are \emph{appropriately regularized}. By ``appropriately regularized,'' we mean that the regularization is chosen so that components of the
parameter that are highly informed by the data are not over-penalized,
and components of the parameter that are poorly informed by the data are
not under-penalized. In Section \ref{sec:source_filter} we derive
quantitative bounds on $\delta$ and $\beta$ for the special case of
source inversion problems with spectral filtering regularization. When
the regularization is chosen appropriately, these bounds are
independent of the mesh size and of the information content in the data. 

In Section \ref{sec:numerical_results} we numerically demonstrate the
effectiveness of the preconditioner on a Poisson source inversion
problem with highly informative data and Laplacian
regularization. Preconditioning the KKT system with our preconditioner
results in greater accuracy in three MINRES iterations than the
widely-used regularization preconditioning on the reduced Hessian
system achieves in 50 conjugate gradient iterations. Even though the
regularization is not a spectral filter,
our preconditioner still exhibits mesh independence and good scalability with
respect to a decrease in the regularization parameter
by 10 orders of magnitude. 
As suggested by our theory, we see that the performance
of the preconditioner in the small regularization regime actually
improves as more data is included in the inversion.

\subsection{Desirable properties of KKT preconditioners for inverse problems}

To evaluate the quality of a 
KKT preconditioner, it is useful to consider its performance with
respect to the following desired properties: 
\begin{enumerate}[label=(\alph{enumi})]

\item \textbf{Problem generality:} A KKT preconditioner exhibits
  problem generality if it applies to a wide variety of inverse
  problems. \label{enum:prob_gen}

\item \textbf{Efficient solvers for preconditioner subproblems:} If
  applying the inverse of the preconditioner to a vector involves
  solving subproblems, efficient solvers for those subproblems are
  required.  \label{enum:eff_subprob}

\item \textbf{Mesh scalability:} Finite dimensional inverse problems
  often arise from discretizations of infinite dimensional inverse
  problems. Preconditioners for such problems are {\em mesh scalable}
  if the effectiveness of the preconditioner (as measured in terms of
  either the condition number of the preconditioned KKT operator, the
  clustering of the spectrum of the preconditioned KKT operator, or
  the number of Krylov iterations required to converge to a fixed
  tolerance) does not degrade substantially as the meshes used to
  discretize the problem are refined.  \label{enum:mesh_scale}

\item \textbf{Regularization robustness:} KKT preconditioners are
  {\em regularization robust} if their effectiveness does not degrade
  substantially as the regularization parameter $\alpha$ is made
  smaller. \label{enum:reg_dependence} 

\item \textbf{Data scalability:} KKT preconditioners are {\em data
  scalable} if their effectiveness does not degrade substantially as
  more data---or rather, more informative data---are included in
  the inverse problem. \label{enum:data_dependence}

\end{enumerate}
Currently there is no known preconditioner 
that exhibits uniformly good performance with respect to all of these
properties. 
In this paper, we provide a combination of theoretical results and numerical
evidence demonstrating that our preconditioner provides substantial
improvements over existing preconditioners, especially with respect to
problem generality and data scalability. 

Within the scope of inverse problems, we view the goal of
robustness to arbitrarily-chosen values of the regularization
parameter, \ref{enum:reg_dependence}, to be 
unwarranted and unnecessarily restrictive. In particular, for properly
regularized inverse problems the regularization operator and
regularization parameter are not arbitrary. Rather, they are chosen in
response to the data available in the problem: that is, to constrain
parameter modes that are not informed by the data, while minimally
modifying components of the parameter that are informed by the
data. Thus it is important that the preconditioner performs well as
the informative content of the data increases while the regularization
parameter decreases correspondingly. However, it is not important for
the preconditioner to perform well in the under-regularized regime in
which the regularization parameter is small but the data are
uninformative. In this under-regularized regime, a good preconditioner
would simply accelerate convergence to noise, i.e., more rapid
solution of the wrong problem. Instead, we advocate designing
preconditioners that perform well with increasingly informative data,
\ref{enum:data_dependence}, for which the regularization parameter is
considered a dependent parameter chosen so that the inverse problem is
neither substantially over- nor under-regularized. This extra
flexibility permits design of the preconditioner to better address the
entire set of desired properties
\ref{enum:prob_gen}--\ref{enum:data_dependence}. 

Among data \ref{enum:data_dependence} and mesh scalable
\ref{enum:mesh_scale} preconditioners, ours is the most general
\ref{enum:prob_gen}. The subproblems that must be solved
\ref{enum:eff_subprob} while applying the preconditioner are of
similar difficulty to those encountered by existing (less general)
data-scalable preconditioners. What remains for our preconditioner to
fully satisfy all of the remaining desirable properties,
\ref{enum:prob_gen}, \ref{enum:eff_subprob}, \ref{enum:mesh_scale},
and \ref{enum:data_dependence}, is to generalize it to non-invertible
$T$. As mentioned above, this is ongoing research;
nevertheless, there are many inverse problems characterized by
invertible $T$ operators.  In addition to source inversion problems
(addressed in Sections \ref{sec:source_filter} and
\ref{sec:numerical_results}), coefficient inverse problems in which
the state and parameter share the same discretization often give rise
to invertible $T$.

\subsection{Review of existing work}
\label{sec:review_of_existing_work}

A wide variety of preconditioners for KKT operators similar to
\eqref{eqn:KKT_matrix_full} have been
developed in a number of different contexts including parameter
estimation, optimal control, PDE constrained optimization, optimal
design, and saddle point systems arising in mixed discretizations of
forward problems \cite{BenziGolubLiesen05, Choi12, MardalWinther10}. In the following subsections we discuss existing preconditioners based on the reduced Hessian (Section \ref{sec:reduced_hessian_literature}), the adjoint Schur complement (Section \ref{sec:adjoint_schur}), block scaling (Section \ref{sec:block_scaling}), and multigrid (Section \ref{sec:multigrid}).

We will see that existing preconditioners either scale poorly with increasing data and
decreasing regularization, or they only apply
to specific problems, or they make restrictive assumptions about the $B$, $R$, and $T$ operators. In particular, in the literature it is common to assume that the parameter and/or observation spaces
are $L^2$ spaces, and one or more of the operators $B$, $R$, and $T$ are spectrally equivalent to either identity maps ($I$), or restriction maps ($\Gamma$) that restrict functions to a subdomain. 
These assumptions on $B$, $R$ and $T$ may be inappropriate for the inverse problem at hand. For example, they prevent one from using observations of derived quantities such as flux, using smoothing Laplacian-like regularization,
and inverting for material coefficients. We will regularly note such assumptions by following references with a parenthetical expression. E.g., ``\cite{StollWathen10} ($L^2$, $R\approx I,~B\approx \Gamma,~T\approx-I$)'' means that the preconditioner in reference \cite{StollWathen10} assumes that the parameter and observation spaces are $L^2$ spaces, $R$ is spectrally equivalent to an identity map ($L^2$ regularization), $B$ is spectrally equivalent to a restriction map (direct observations of the state on a subdomain), and $T$ is spectrally equivalent to a negative identity map (the parameter enters the state equation on the right hand side as a source term).

\subsubsection{The reduced Hessian}
\label{sec:reduced_hessian_literature}
The reduced Hessian is the Hessian of the unconstrained reformulation
of optimization problem \eqref{eqn:linear_optimization_problem}, in
which the constraint is eliminated by viewing the state $u$ as an
implicit function of the parameter $q$ via solution of the state
equation. We discuss this reduced space problem in more detail in
Section \ref{sec:reduced_problem}. For linear inverse problems (as
considered in this paper), the reduced Hessian is equivalent to the
Schur complement of the KKT operator with respect to the parameter. In
other words, it is the operator remaining when the state and adjoint
variables (and corresponding equations) are solved for and eliminated
from the KKT system. Likewise, the KKT operator can be derived by
starting with the reduced Hessian, defining auxiliary variables, and
performing simple algebraic manipulations. Thus performing solves with
the reduced Hessian and performing KKT solves are
equivalent: if one can efficiently solve the former then one
can efficiently solve the later and vice versa. For this reason, a
popular class of methods for solving \eqref{eqn:KKT_matrix_full}
relies on approximations or preconditioners for the reduced Hessian
\cite{BirosGhattas05a, BirosGhattas05, HaberAscher01}.

The most popular class of general purpose preconditioners for the
reduced Hessian is based on approximating this operator with just the
regularization operator, and either neglecting the data misfit term or
dealing with it through some form of low rank approximation.  The
regularization is typically an elliptic operator and can be inverted
using multigrid or other standard techniques. Furthermore, for
ill-posed inverse problems the data misfit portion of the reduced
Hessian at the optimal solution is usually a compact operator in the
infinite-dimensional limit \cite{Bui-ThanhGhattas12a,
  Bui-ThanhGhattas12, Bui-ThanhGhattas13a, Vogel02}. Thus Krylov
methods preconditioned by the regularization operator usually yield
mesh independent, superlinear convergence rates\footnote{Here, by \emph{superlinear}, we mean that the norm of the error decays superlinearly with respect to the number of Krylov iterations.} \cite{AxelssonKaratson07, Fortuna79a, HerzogSachs15}. However, the importance of the
regularization term in the reduced Hessian decreases as the
regularization parameter is made smaller, and the importance of the
data misfit term increases as the informativeness of the data
increases. Indeed, the numerical rank of the data misfit portion of
the reduced Hessian is roughly the number of parameter modes that are
``informed'' by the data. In addition, the eigenvalues of the regularization
preconditioned Hessian are typically well-separated, which means that this approach will still require
large numbers of Krylov iterations on problems with highly informative data.
Thus, the best case scenario from a scientific
standpoint (highly informative data) is the worse case scenario from a
computational standpoint (large numbers of Krylov iterations
required).

Other problem-specific reduced Hessian solvers and preconditioners
have been developed using a diverse set of techniques including
analysis of the symbol of the reduced Hessian \cite{ArianTaasan99},
matrix probing \cite{DemanetLetourneauBoumalEtAl12}, approximate
sparsity in curvelet frames \cite{HerrmannMoghaddamPeyman08}, and
analytic expressions derived for model problems \cite{AdavaniBiros10, ArianIollo09, Flath13, FlathWilcoxAkcelikEtAl11, GholamiMangBiros16}. 

\subsubsection{Schur complement for the adjoint variable}
\label{sec:adjoint_schur}

In contrast to the approaches based on the reduced Hessian described
above (where the state and adjoint are eliminated), another class of
preconditioners of increasing interest in recent years is based on
block factorizations that eliminate the parameter and state, resulting
in a Schur complement operator for the adjoint variable.  This
approach requires one to design preconditioners for the objective
block (the 2$\times$2 block corresponding to $q$ and $u$ in
\eqref{eqn:KKT_matrix_full}) and for the Schur complement associated
with the adjoint variable. In the case of limited observations, the
objective block is singular and requires special handling; a common
approach is to add a small positive diagonal shift to the block. 

Mesh independent block diagonal preconditioners based on approximating
the objective block with mass matrices and the adjoint Schur
complement with $AA^*$ have been proposed for $L^2$ regularized
optimal control problems with the Poisson equation as a constraint
and a control objective targeting the state variable directly \cite{ReesDollarWathen10, ReesStollWathen10} ($L^2$, $R\approx I, B\approx I, T\approx -I$), and extended
to problems with parabolic PDE constraints and limited observations \cite{StollWathen10} ($L^2$, $R\approx I, B\approx \Gamma, T\approx-I$). 
More nuanced approximations of the Schur
complement have been shown to yield robustness with respect to the
regularization parameter for problems in the elliptic case in \cite{PearsonWathen12} ($L^2$, $R\approx I, B \approx I, T\approx -I$) and the parabolic case in \cite{PearsonStollWathen12} ($L^2$, $R\approx \Gamma, B\approx \Gamma, T\approx -\Gamma$). Regularization robust adjoint Schur complement based KKT preconditioners have also been developed for optimal control problems in cases where there are additional box constraints on the control and state variables \cite{PearsonStollWathen14} ($L^2$, $R\approx I$, $T\approx -I$). A general framework for using the Schur complement for the adjoint variable to precondition optimal control problems with box constraints is analyzed in an abstract function space setting in \cite{SchielaUlbrich14}, with only minimal assumptions on the operators $B$, $R$, and $T$. However, the specific Schur complement preconditioners presented in \cite{SchielaUlbrich14} are not regularization robust.

Certain non block diagonal approximations to KKT operators 
\cite{BankWelfertYserentant90} have been used to precondition elliptic
PDE constrained optimal control problems with $L^2$ regularization and observations 
\cite{HerzogSachs10, SchoberlZulehner07} ($L^2$, $B\approx I$, $R\approx I$, $T\approx -I$). Preconditioners of this type
have also been shown to be Hermitian positive definite in certain
nonstandard inner products, allowing the use of conjugate gradient as
a Krylov solver \cite{BramblePasciak88a, StollWathen08}.

Inner-outer methods where the Schur complement solve is performed
(exactly or approximately) with an additional inner stationary
iteration have also been proposed for several problems. These
include optimal Stokes control with $L^2$ regularization and observations \cite{ReesWathen11} ($L^2$, $B\approx I$, $R\approx I$, $T\approx -I$), and
optimal transport with a problem-specific diagonal regularization
operator \cite{BenziHaberTatalli11}. Recently, a method of this type
was proposed for optimal control problems with elliptic and parabolic
PDE constraints and smoothing regularization ($L^2$, $R^*R \approx \Delta + I$)
\cite{BarkerReesStoll16}. Regularization robustness was demonstrated
for the case $B =I$. 

\subsubsection{Block scaling}
\label{sec:block_scaling}

An abstract framework for constructing parameter independent (e.g., regularization robust) block diagonal preconditioners for saddle point systems is studied in \cite{Zulehner11} and applied to optimal control problems with elliptic and Stokes PDE constraints, with $B\approx I$, $R\approx I$, $T\approx -I$. In \cite{NielsenMardal10}, a certain class of block diagonal
KKT preconditioners for inverse problems (satisfying many assumptions)
was shown to be mesh independent and only weakly dependent on the
regularization parameter.\footnote{Note that in
  several of the papers cited in this subsection, the meaning of $B$
  and $T$ are switched relative to their use here.} One of
the central assumptions of the theory for this block diagonal
preconditioner is that the spectrum of the observation operator decays
exponentially. In a subsequent paper this assumption was replaced with
the similar assumption that the spectrum of the un-regularized KKT
system decays exponentially \cite{NielsenMardal13}.
Since the decay rates of these spectra depend on
the informativeness of the data, these assumptions are not applicable
(with good constants) in the context of inverse problems with highly
informative data. To overcome this limitation, recently the 
block diagonal preconditioner 
\begin{equation}
\label{eqn:mardal_nielsen_nordaas}
\begin{bmatrix}
\alpha I \\ & B^* B + \alpha \widehat{A^*A} \\ && \frac{1}{\alpha} I
\end{bmatrix}
\end{equation}
was proposed in \cite{MardalNielsenNordaas17}, 
where
$\widehat{A^*A}$ is a 4th order elliptic operator that is spectrally 
equivalent to $A^*A$. This preconditioner was proven to be mesh and
regularization robust for a specific source inversion problem with
$L^2$ regularization
($L^2$, $R\approx I$, $T\approx -I$). Despite substantial differences in motivation and analysis,
our proposed preconditioner \eqref{eqn:preconditioner} could be
considered as a generalization of this work to more general operators
$R$ and $T$. Specifically, setting $\rho=\alpha$ (instead of our
suggestion $\rho=\sqrt{\alpha}$), our preconditioner has the same
second and third diagonal blocks as the preconditioner
\eqref{eqn:mardal_nielsen_nordaas}, but contains a more elaborate 
operator depending on $R$ and $T$ in the first block.

\subsubsection{Multigrid}
\label{sec:multigrid}

Another family of KKT preconditioners for parameter estimation
problems are based on multigrid (see the review paper
\cite{BorziSchulz09} and references therein). These techniques are
classically categorized into three main categories: (1) speeding up or
preconditioning forward and adjoint solves, (2) using multigrid to
precondition the reduced Hessian, and (3) collective smoothing.

Methods in the first category do not use multigrid to address the
fundamental difficulties stemming from highly data informed inverse
problems: speeding up the forward (and adjoint) solves does not
address the challenge of creating a preconditioner that is data
scalable, because the number of forward/adjoint solves that must be
done scales with the informativeness of the data.

The primary difficulty with category (2) is that when the
regularization is chosen appropriately, the regularization and data
misfit terms of the reduced Hessian tend to ``fight'' each other (more
on this in Section \ref{sec:appropriate_regularization_full}). Thus smoothers for
the regularization term tend to be roughers for the data misfit term,
and vice versa. As a result, multigrid methods belonging to the second
category tend to be restricted to the case $R\approx I$.  We note in
particular the following papers \cite{AdavaniBiros08, AdavaniBiros10, AkcelikBirosDraganescuEtAl05, DruaguanescuDupont08, DruaguanescuSoane13}, on elliptic, parabolic, and Stokes
source inversion problems with this restriction.

In collective smoothing (3), one designs multigrid smoothers for the
entire KKT system (parameter, forward, and adjoint) at once
\cite{Borzi03, BorziGriesse05}. Collective smoothers also tend to
either require $R\approx I$, e.g., \cite{TakacsZulehner11}, or substantially
degrade in performance as the regularization parameter decreases,
e.g., \cite{AscherHaber03}.


\subsection{Commentary on solving the preconditioner subsystems}

Applying our preconditioner \eqref{eqn:preconditioner} requires the
solution of two subsystems with coefficient operators
\begin{equation}
\label{eqn:RR_TT}
\alpha R^*R + \rho T^*T
\end{equation} 
and 
\begin{equation}
\label{eqn:AA_BB}
B^*B + \rho A^*A,
\end{equation}
respectively. This can be a challenge.  However, reduced Hessian preconditioning and
KKT preconditioning for large scale inverse problems with highly informative data are fundamentally difficult endeavors, and the operators \eqref{eqn:RR_TT} and \eqref{eqn:AA_BB} have
many advantages over the alternatives. 

To begin with, we typically have easy access to the entries of the concrete matrix representations of these operators.\footnote{Although (dense) inverses of mass matrices can arise in concrete representations of these subsystems due to the adjoint operation, these inverse mass matrices can typically be replaced with spectrally equivalent sparse lumped mass approximations.} Thus we have at our disposal the entire arsenal of symmetric
positive definite sparse preconditioning techniques that deal with
matrix entries; e.g., incomplete factorizations, factorized sparse
approximate inverses \cite{Ferronato12}, and modern
multilevel techniques including algebraic multigrid and hierarchical
interpolative factorizations \cite{HoYing15}. This stands in direct
contrast to the reduced Hessian, which is dense owing to the inverses
of the forward and adjoint operators within it, and as such may be
accessed only via matrix-vector multiplies. 

Additionally, the data misfit Hessian (which often acts as a compact
operator) and the regularization operator (which often acts as a
differential operator) tend to act in opposition to each other by
construction.\footnote{By ``act in opposition,'' we mean that modes
  that are amplified by one operator tend to be diminished by the
  other operator, and vice versa. This is discussed more in Section
  \ref{sec:appropriate_regularization_full}.} Since the reduced
Hessian is the sum of these operators, it is difficult to design
preconditioners that are effective for both terms in the reduced
Hessian at the same time. In contrast, the different terms in our
subsystems tend not to act in opposition to each other.

In typical applications $R^*R$ is chosen to be an elliptic
differential operator, and $T$ is either identity-like, or acts
like a differential operator. Thus there is good reason to 
believe that multilevel techniques will be effective on the system
$\alpha R^*R + \rho T^*T$ in situations of practical interest. A
similar argument applies to $B^*B + \rho A^*A$ whenever the forward
operator $A$ is amenable to multilevel techniques. In the numerical
results section (Section \ref{sec:numerical_results}), we see that for a source inversion problem with an
elliptic PDE constraint, replacing the two subsystem solves with a few
algebraic multigrid V-cycles results in nearly the same convergence
rate as performing the solves exactly.

Of course, the operators in our subsystems are squared,
and such squaring should always done with caution. 
However, subsystems involving squared operators 
are also present in state of the art
preconditioners that have been proposed in the literature
(see Sections \ref{sec:adjoint_schur} and \ref{sec:block_scaling}). In
particular, a matrix spectrally equivalent to $B^*B + \rho A^*A$ shows
up in the preconditioner proposed in \cite{MardalNielsenNordaas17}. 


\subsection{Setting and notation}
\label{sec:notation}

For the purposes of this paper we consider the case for which all
spaces are finite dimensional Hilbert spaces, as might arise in stable
discretize-then-optimize methods \cite{Gunzburger03} for infinite
dimensional problems. To fix ideas, consider the case of an infinite
dimensional function space $\mathcal{U}_\infty$ approximated by a
finite dimensional function space $\mathcal{U}$, the elements of which
are in turn represented on a computer by lists of degrees of freedom
in $\mathbb{R}^n$ corresponding to a potentially non-orthogonal basis
$\basisoperator:\mathbb{R}^n \rightarrow \mathcal{U}$. Schematically,
\begin{equation*}
\underset{\substack{\infty-\text{dimensional} \\ \text{function space}}}{\mathcal{U}_\infty} \approx \underset{\substack{n-\text{dimensional} \\ \text{function space}}}{\mathcal{U}} \underset{\basisoperator}{\overset{\basisoperator^{-1}}{\rightleftharpoons}} \underset{\substack{\text{representation} \\ \text{space}}}{\mathbb{R}^n}.
\end{equation*}
Here we work in intermediate finite dimensional function spaces like
$\mathcal{U}$. In a representation space associated with a particular
non-orthogonal basis, all formulas from this paper remain essentially
the same, except linear operators are replaced with matrix
representations (arrays of numbers), abstract vectors are replaced
with their concrete representations (lists of numbers), and Gram
matrices (mass matrices) and their inverses appear in various
locations to account for the Riesz representation theorem for adjoints
in a non-orthogonal basis.

The parameter $q$, state $u$, adjoint $\adjoint$, and observations $y$
are assumed to reside in finite dimensional Hilbert spaces
$\mathcal{Q}$, $\mathcal{U}$, $\mathcal{V}$, and $\mathcal{Y}$ with
dimensions $n_q$, $n_u$, $n_u$, and $\nobs$ respectively. 
Linear operators, e.g., $A:\mathcal{U}\rightarrow \mathcal{V}$, are viewed as
abstract mappings between vector spaces, without reference to any
particular basis, except in the case where the domain and/or range are
of the form $\mathbb{R}^n$. Although we work with operators, we make
routine use of standard results for matrices that are easily extended
to the finite dimensional linear operator case, such as the existence
and properties of eigenvalues of certain classes of operators, and the
existence of the singular value decomposition. Transferring these
results from the matrix setting to the finite dimensional linear
operator setting is a straightforward process that involves working
with the matrix representations of the operators in bases that are
orthonormal with respect to the inner products on their domains and
ranges.\footnote{Note that such matrix representations with respect to
  orthonormal bases are generally not the same as the matrix
  representations that arise computationally within, say, a finite
  element method.} Concatenation of linear operators such as $BA$
denotes composition of linear maps, and concatenation of a linear
operator with a vector, as in $Au$, denotes the action of the operator
on the vector. Adjoints of operators are denoted by
superscript stars, as in $A^*$. Superscript stars on a vector denote the linear functional that takes inner products with that vector. Namely, $u^*: v \mapsto (u,v)$, where $(\cdot,\cdot)$ is the inner product for the space $u$ resides in.  Functions of a linear operator
such as inverses and square roots (where defined) are denoted in the
standard way, i.e., $A^{-1}, A^{1/2}$. Unless otherwise noted, the
norm of a vector, e.g., $\nor{u}$, is the norm associated with the
Hilbert space the vector resides in, and the norm of an operator,
e.g., $\nor{A}$, is the induced norm associated with the norms on the
domain and range spaces of the operator. Block operators, such as
\begin{equation*}
\begin{bmatrix}
X & Y \\ Z & W
\end{bmatrix} : \text{domain}(X)\oplus\text{domain}(Y) \rightarrow \text{range}(X)\oplus\text{range}(Z)
\end{equation*}
are defined by the blockwise action of their constituent operators, in
the usual way, and with the expected consistency restrictions on the
domains and ranges of the various blocks. Empty blocks are assumed to
contain the zero operator with the appropriate domain and range. We
use the notation $\Lambda = \text{diag}(\lambda_\ind)_{n,m}$ to denote
the linear map $\Lambda:\mathbb{R}^m\rightarrow \mathbb{R}^n$ whose
matrix representation in the standard basis is diagonal, with
$\ind$th diagonal entry $\lambda_\ind$. Likewise, when we write $\Phi
= \begin{bmatrix}\phi_1 & \phi_2 & \dots & \phi_m\end{bmatrix}$ for an
  operator $\Phi:\mathbb{R}^m\rightarrow \mathcal{X}$ and vectors
  $\phi_\ind \in \mathcal{X}$, we mean that $\phi_\ind$ is the result
  of applying $\Phi$ to the $\ind$th standard basis vector in
  $\mathbb{R}^{k}$ ($\phi_\ind$ is the ``$\ind$th column'' of
  $\Phi$). An operator is said to be square if the dimension of the
  domain and range are equal, and rectangular if the dimensions of the
  domain and range might differ.

The maximum and minimum singular values of an operator $Y$ are denoted
$\sigma_\text{max}(Y)$ and $\sigma_\text{min}(Y)$,
respectively. Similarly, the maximum and minimum eigenvalues of an
operator $X$ with strictly real eigenvalues are denoted
$\lambda_\text{max}(X)$ and $\lambda_\text{min}(X)$, respectively. The
condition number of an operator $X$ is denoted $\text{cond}(X)$.

\section{Derivation of the preconditioner}

\label{sec:augmented_background}

The preconditioner in \eqref{eqn:preconditioner} is derived from a
block diagonal approximation to the KKT operator associated with an
augmented Lagrangian formulation of optimization problem
\eqref{eqn:linear_optimization_problem}. In the following derivation,
it will be convenient to group the parameter and state variables into
a single vector $x:=\begin{bmatrix}q \\ u\end{bmatrix}$. With this
grouping, optimization problem \eqref{eqn:linear_optimization_problem}
takes the following standard quadratic programming form,
\begin{equation}
\label{eqn:generic_optimization_problem}
\begin{aligned}
\min_{x} &\quad \frac{1}{2}x^* M x - g^*x\\
\text{such that} &\quad Cx=f,
\end{aligned}
\end{equation}
where $g:=\begin{bmatrix}0 \\
B^*y\end{bmatrix}$, $C:=\begin{bmatrix}T & A\end{bmatrix}$, and $M$ is the (generally singular) operator,
\begin{equation*}
M := \begin{bmatrix}\alpha R^*R \\ & B^*B\end{bmatrix}.
\end{equation*}
The KKT operator from equation \eqref{eqn:KKT_matrix_full} then becomes,
\begin{equation}
\label{eqn:KKT_matrix_3x3_2x2}
K := \begin{bmatrix}\alpha R^*R & & T^* \\
& B^*B & A^* \\
T & A
\end{bmatrix} = \begin{bmatrix}
M & C^* \\ C
\end{bmatrix}.
\end{equation}
For non-singular $M$, it is well-established \cite{MurphyGolubWathen00} that the following positive definite block diagonal preconditioner,
\begin{equation}
\label{eqn:basic_schur_complement_preconditioner}
\begin{bmatrix}
M \\ & C M^{-1} C^*
\end{bmatrix},
\end{equation}
clusters the eigenvalues of the preconditioned operator onto at most
three distinct values. Note that the positive operator $C M^{-1} C^*$
is the negative Schur complement for the adjoint variable. Since the
objective block $M$ is singular whenever $B$ is not full rank (i.e.,
in the case of limited observations), we cannot directly use this
result. However, \eqref{eqn:generic_optimization_problem} has the same
solution as the following augmented optimization problem, 
\begin{equation*}
\label{eqn:generic_augmented_optimization_problem}
\begin{aligned}
\min_{x} &\quad \frac{1}{2}x^* M x - g^*x + \frac{\rho}{2} \nor{Cx-f}^2\\
\text{such that} &\quad Cx=f,
\end{aligned}
\end{equation*}
where the constraint is enforced strictly, but an additional quadratic
penalty term is added to the objective function to further penalize
constraint violations when an iterate is away from the optimal
point. The KKT operator for this augmented optimization problem is 
\begin{equation}
\label{eqn:augmented_kkt}
\begin{bmatrix}
M + \rho C^*C & C^* \\ C
\end{bmatrix}.
\end{equation}
With this augmentation, the objective block is now nonsingular
provided that $M$ is coercive on the null space of $C$
(i.e., the optimization problem is
well-posed). 

The positive definite block diagonal preconditioner analogous to
\eqref{eqn:basic_schur_complement_preconditioner} but based on the
augmented KKT operator \eqref{eqn:augmented_kkt} is 
\begin{equation}
\label{eqn:general_augmented_preconditioner}
\begin{bmatrix}
M + \rho C^*C \\ & C(M + \rho C^*C)^{-1}C^*
\end{bmatrix}.
\end{equation}
This preconditioner clusters the spectrum of the original
(non-augmented) KKT operator onto the union of two well-conditioned
intervals \cite{GolubGreifVarah06}. However, this preconditioner is
not practical since it is computationally difficult to perform solves
$(M + \rho C^*C)^{-1}$, as well as apply the Schur complement $C(M +
\rho C^*C)^{-1}C^*$ and its inverse. Thus we construct the
preconditioner in \eqref{eqn:preconditioner} by replacing these blocks
with cheaper approximations. 

Intuitively, when $\rho$ is large, constraint violations are more
strongly penalized by the objective, so the adjoint variable does not
need to ``work as hard'' to enforce the constraint. This manifests in
better conditioning of the Schur complement for the adjoint, $C(M +
\rho C^*C)^{-1}C^*$. Indeed, it is easy to see that $C(M + \rho
C^*C)^{-1}C^* \rightarrow \frac{1}{\rho}I$ as $\rho \rightarrow
\infty$. To this end, we expect the approximate
preconditioner
\begin{equation}
\label{eqn:penalty_preconditioner}
\begin{bmatrix}
M + \rho C^*C \\ & \frac{1}{\rho}I
\end{bmatrix},
\end{equation}
to perform well when $\rho$ is large. The preconditioner
\eqref{eqn:penalty_preconditioner} is, essentially, a mechanism for
using an unconstrained penalty method to precondition a constrained
optimization problem.

The augmented objective block, $M + \rho C^*C$, takes the form
\begin{equation*}
M + \rho C^*C = \begin{bmatrix}
\alpha R^*R + \rho T^*T & \rho T^*A \\ \rho A^*T & B^*B + \rho A^*A
\end{bmatrix}.
\end{equation*}
Since this $2 \times 2$ block operator is difficult to solve, we
cannot use preconditioner \eqref{eqn:penalty_preconditioner} directly,
and must make further approximations. In particular, the off-diagonal
blocks are scaled by $\rho$, so when $\rho$ is small we expect the
relative importance of these blocks to be reduced. Dropping the
off-diagonal blocks in $M + \rho C^*C$ and then substituting the
result into \eqref{eqn:penalty_preconditioner} yields our overall $3
\times 3$ block diagonal preconditioner \eqref{eqn:preconditioner},
\begin{equation*}
P:=\begin{bmatrix}
\alpha R^*R + \rho T^*T \\
& B^*B + \rho A^*A \\
& & \frac{1}{\rho} I
\end{bmatrix}.
\end{equation*}

One hopes that it is possible to choose $\rho$ large enough that the Schur complement is well approximated by $\frac{1}{\rho}I$, but at the same time small enough that the objective block is well-preconditioned by the block diagonal approximation. Our theory and numerical results in subsequent sections suggest that these competing interests can be balanced by choosing $\rho=\sqrt{\alpha}$, provided that the inverse problem is appropriately regularized. In the next section we provide an abstract theoretical analysis of the preconditioner without making any assumptions about the value of $\rho$. A more specific analysis for source inversion problems with spectral filtering regularization, which motivates our choice of $\rho$, is performed in Section \ref{sec:source_filter}.

\section{Abstract analysis of the preconditioner}
\label{sec:theory}

In this section we analyze the preconditioned KKT operator, showing
that it is well-conditioned if bounds on the arithmetic and geometric
means of certain damped projectors are satisfied. First, we highlight
the structure of the preconditioned KKT operator, state the necessary
arithmetic and geometric mean bounds, and recall a prerequisite result
from Brezzi theory. Then we prove bounds on the condition number of
the preconditioned KKT operator based on the arithmetic and geometric
mean bounds.

\subsection{Prerequisites}

\subsubsection{Preconditioned KKT operator}

Let $E$ denote the symmetrically preconditioned KKT operator,
\begin{equation*}
E := P^{-1/2}KP^{-1/2},
\end{equation*}
with $P$ and $K$ defined in \eqref{eqn:preconditioner} and
\eqref{eqn:KKT_matrix_3x3_2x2}, respectively. Direct calculation shows
that the symmetrically preconditioned KKT operator has the following
block structure,
\begin{equation}
\label{eqn:blockwise_E}
E = \begin{bmatrix}I - F^*F & & F^* \\ & I - G^*G & G^* \\ F & G\end{bmatrix},
\end{equation}
where the operators $F$ and $G$ are defined as
\begin{equation*}
F := T\left(\frac{\alpha}{\rho}R^*R + T^*T\right)^{-1/2}, \quad
G := A\left(\frac{1}{\rho}B^*B + A^*A\right)^{-1/2}.
\end{equation*}
For convenience, we further denote the objective and constraint blocks
of the preconditioned system by $X$ and $Y$, respectively, where
\begin{equation}
\label{eqn:defn_of_X_and_Y}
X := \begin{bmatrix}I - F^*F \\ & I - G^*G\end{bmatrix}, \quad Y := \begin{bmatrix}F & G\end{bmatrix},
\end{equation}
so that the preconditioned KKT operator takes the form
\begin{equation}
\label{eqn:E_XY}
E = \begin{bmatrix}
X & Y^* \\ Y
\end{bmatrix}.
\end{equation}

\subsubsection{Arithmetic and geometric mean assumptions}
\label{sec:am_gm_conditions}
The quality of the preconditioner depends on the arithmetic and geometric means of the following two \textit{damped projectors},\footnote{Recall that $X(\gamma I + X^*X)^{-1}X^*$ approximates the orthogonal projector onto the column space of $X$ for small $\gamma$. With this in mind, one can view an operator of the form $X(Y^*Y + X^*X)^{-1}X^*$ as an approximate projector onto the column space of $X$, damped by the operator $Y$. We call such operators \textit{damped projectors}.}
\begin{equation}
\label{eqn:QR}
Q_R := FF^* = T\left(\frac{\alpha}{\rho}R^*R + T^*T\right)^{-1}T^*,
\end{equation}
and
\begin{equation*}
Q_J := GG^* = A\left(\frac{1}{\rho}B^*B + A^*A\right)^{-1}A^*.
\end{equation*}
Note that if $T$ is invertible, we have
\begin{equation}
\label{eqn:QJ_J}
Q_J = T\left(\frac{1}{\rho}J^*J + T^*T\right)^{-1}T^*,
\end{equation}
where
\begin{equation}
\label{eqn:parameter_to_observable_map}
J := -B A^{-1} T
\end{equation}
is the \emph{parameter-to-observable map} that transforms candidate parameter fields into predicted observations. 

As damped projectors, it is easy to show that the eigenvalues of $Q_R$
and $Q_J$ are bounded between $0$ and $1$.  The degree to which the
eigenvalues of $Q_R$ are damped below $1$ is controlled by the
strength of the damping term $\frac{\alpha}{\rho}R^*R$ and its
interaction with the eigenstructure of $T$. Similarly, the degree of
damping of the eigenvalues of $Q_J$ is controlled by the strength of
the damping term $\frac{1}{\rho}J^*J$ and its interaction with the
eigenstructure of $T$ (or the interaction of the damping term
$\frac{1}{\rho}B^*B$ with the eigenstructure of $A$, when $T$ is not
invertible). 

\begin{assume}[Damped projector AM-GM bounds]
\label{assume:amgm}
We assume there exist constants $\beta, \delta$ such that the following bounds on the spectrum of the arithmetic and geometric means of the damped projectors hold:
\begin{enumerate}[label=\alph*), topsep=1em, itemsep=1em]
\item \quad 
$ \displaystyle 0 < \delta \le \frac{1}{2}\lambda_\text{min}\left(Q_R + Q_J\right),$
\item \quad $\lambda_\text{max}\left(Q_R Q_J\right)^{1/2} \le \beta < 1.$
\end{enumerate}
\end{assume}
Theorem \ref{thm:cond_bound} will establish that the larger $\delta$ is and the smaller $\beta$ is, the more effective preconditioner \eqref{eqn:preconditioner} is.

Qualitatively, if $T$ is invertible and the regularization is chosen
to act in opposition to the data misfit, as desired for the problem to
be properly regularized based on the analysis that will be performed
in Section \ref{sec:appropriate_regularization_full}, then $R$ will
act strongly on vectors that $J$ acts weakly on, and vice versa. Thus
we expect the damping in $Q_R$ to be strong where the damping in $Q_J$
is weak, and vice versa. Consequently, it is reasonable to hypothesize
that Assumption \ref{assume:amgm} will be satisfied with good
constants for inverse problems that are properly regularized. Making
this intuition precise requires careful analysis of the interaction
between the eigenstructures of $R$, $J$, and $T$, which must be done
on a case-by-case basis. We perform this analysis for the special case
of source inversion problems with spectral filtering regularization in
Section \ref{sec:source_filter}, and expect similar behavior to hold
in more general situations. 

\subsubsection{Brezzi theory for well posedness of saddle point systems}
\label{sec:coercive_saddle_point}
The proof of the coercivity bound for our preconditioned KKT operator invokes Brezzi theory for saddle point systems \cite{BrezziFortin91, Demkowicz06a, XuZikatanov00}. In particular, we use a recently discovered bound in \cite{Krendl13}, which is slightly sharper than bounds derived from the classical theory. Here we state the prerequisite theorem (without proof), and refer the reader to \cite{Krendl13} for more details. This theory can be stated in much greater generality than what we present here.

\begin{thm}[Krendl, Simoncini, and Zulehner]
\label{thm:generic_saddle_point}
Let $E$ be the saddle point system
\begin{equation*}
E = \begin{bmatrix}
X & Y^* \\ Y
\end{bmatrix},
\end{equation*}
where $X:\mathcal{X} \rightarrow \mathcal{X}$ is self-adjoint and positive semidefinite, and $Y:\mathcal{X} \rightarrow \mathcal{Y}$. Further suppose that 
\begin{itemize}
\item $X$ is coercive on the kernel of $Y$, i.e.,
\begin{equation*}
0 < a \le \inf_{\substack{\xpre \in \text{Ker}(Y) \\ \xpre \neq 0}} \frac{\xpre^*X\xpre}{\nor{\xpre}^2}.
\end{equation*}
\item $X$ is bounded, i.e., $\nor{X} < b$.
\item The singular values of $Y$ are bounded from below, i.e.,
\begin{equation*}
0 < c \le \sigma_\text{min}(Y).
\end{equation*}
\end{itemize}
Then the minimum singular value of $E$ is bounded from below, with the bound
\begin{equation}
\label{eqn:brezzi_improved}
\frac{a}{1 + \left(\frac{b}{c}\right)^2} \le \sigma_\text{min}(E).
\end{equation}
\end{thm}

\subsection{Bound on the condition number of the preconditioned KKT operator}
\label{sec:cond_bound}

To apply Brezzi theory (Theorem \ref{thm:generic_saddle_point}) to our problem, we need a coercivity bound for $X$ on the kernel of $Y$, a continuity bound for $X$ on the whole space, and a coercivity bound on $Y$, where the constants for these bounds are denoted $a$, $b$, and $c$, respectively. We use the particular structure of the KKT operator \eqref{eqn:KKT_matrix_3x3_2x2}, along with Assumption \ref{assume:amgm}, to derive these bounds in Section \ref{sec:M_rho_spectrum}. In Proposition \ref{prop:11_block_bounds} we derive bounds for $a$ and $b$, and then in Proposition \ref{prop:constraint_bounds} we derive a bound for $c$. 

In Section \ref{sec:kkt_upper_lower_cond} we derive well posedness and continuity bounds on the preconditioned KKT operator, $E$, and then combine these bounds to provide an upper bound on the condition number of $E$. Well posedness of $E$ is proven in Proposition \ref{prop:kkt_lower_bound}, using Brezzi theory in the form of Theorem \ref{thm:generic_saddle_point}. Continuity of $E$ is proven directly in Proposition \ref{prop:kkt_upper_bound}. Finally, the overall condition number bound for $E$ is given in Theorem \ref{thm:cond_bound}.

\subsubsection{Bounds on $X$ and $Y$}
\label{sec:M_rho_spectrum}

\begin{prop}[Bounds $a$, $b$ for $X$]
\label{prop:11_block_bounds}
The eigenvalues of $X$ restricted to the kernel of $Y$ are bounded below by $1-\beta$, where $\beta$ is defined in Assumption \ref{assume:amgm}. That is, 
\begin{equation*}
0 < 1-\beta \le \inf_{\substack{\xpre \in \text{Ker}(Y) \\ \xpre \neq 0}} \frac{\xpre^*X\xpre}{\nor{\xpre}^2}.
\end{equation*}
Additionally, 
\[
\nor{X} \le 1.
\]
\end{prop}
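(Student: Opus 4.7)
My plan is to handle the two inequalities separately. The upper bound $\|X\|\le 1$ follows directly from the block diagonal structure of $X$: it suffices to show that each block $I-F^*F$ and $I-G^*G$ lies between $0$ and $I$. The key observation is that $F^*F$ and $FF^*=Q_R$ share their nonzero spectrum, and $Q_R$ has eigenvalues in $[0,1]$ because it is a damped projector as defined in \eqref{eqn:QR}. Thus $0 \preceq F^*F \preceq I$, and applying the same argument to $Q_J=GG^*$ gives the matching bound on $I-G^*G$.

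The coercivity on $\mathrm{Ker}(Y)$ is the substantive claim. I would take $x=\begin{bmatrix}x_1\\x_2\end{bmatrix}\in\mathrm{Ker}(Y)$, which means $Fx_1+Gx_2=0$, and directly expand using the block structure of $X$,
\[
x^*Xx = \|x_1\|^2+\|x_2\|^2 - \|Fx_1\|^2 - \|Gx_2\|^2.
\]
The constraint immediately forces $\|Fx_1\|^2=\|Gx_2\|^2$, and I would then rewrite this common value using the constraint once more to introduce the cross-block operator $F^*G$:
\[
\|Fx_1\|^2 = -\langle Fx_1, Gx_2\rangle = -\langle x_1, F^*Gx_2\rangle.
\]

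The main obstacle, and really the only nontrivial step, is to bound $\|F^*G\|$ using the geometric-mean hypothesis of Assumption \ref{assume:amgm}(b), which controls $\lambda_\text{max}(Q_RQ_J)$ rather than $F^*G$ directly. The bridge is the cyclic property of the nonzero spectrum of an operator product,
\[
\|F^*G\|^2 = \lambda_\text{max}(G^*FF^*G) = \lambda_\text{max}(FF^*GG^*) = \lambda_\text{max}(Q_RQ_J) \le \beta^2,
\]
which yields $\|F^*G\|\le\beta$. Cauchy-Schwarz then gives $\|Fx_1\|^2 \le \beta\|x_1\|\|x_2\|$, and the elementary bound $2\|x_1\|\|x_2\|\le \|x_1\|^2+\|x_2\|^2=\|x\|^2$ upgrades this to $2\|Fx_1\|^2\le\beta\|x\|^2$. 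Substituting into the expansion of $x^*Xx$ gives $x^*Xx\ge(1-\beta)\|x\|^2$, which establishes the Rayleigh-quotient lower bound and completes the proof. The positivity $1-\beta>0$ comes for free from $\beta<1$ in Assumption \ref{assume:amgm}(b).
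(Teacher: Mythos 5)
Your proof is correct, and it reaches the lower bound by a somewhat different route than the paper while relying on the same essential bridge to Assumption \ref{assume:amgm}(b). The paper observes that on $\mathrm{Ker}(Y)$ one has $x^*Xx = x^*(X+Y^*Y)x$, computes $X+Y^*Y = \bigl[\begin{smallmatrix} I & F^*G \\ G^*F & I\end{smallmatrix}\bigr]$, and extracts its spectrum from the eigenvalue equation: the shifted eigenvalues satisfy $|\lambda-1|^2 = \lambda(G^*FF^*G)$, and a similarity/cyclic argument identifies these with the eigenvalues of $FF^*GG^* = Q_RQ_J \le \beta^2$, giving $\lambda \ge 1-\beta$. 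You instead work directly with the kernel constraint $Fx_1 + Gx_2 = 0$, which lets you write $x^*Xx = \nor{x}^2 - 2\nor{Fx_1}^2$ and bound the cross term by Cauchy--Schwarz once you know $\nor{F^*G} \le \beta$; that operator-norm bound is obtained by exactly the same cyclic-spectrum identity $\lambda_\text{max}(G^*FF^*G) = \lambda_\text{max}(Q_RQ_J)$ that the paper uses. So the key lemma is shared, but your argument avoids the eigenvalue analysis of the augmented block operator in favor of elementary inequalities; the paper's version yields as a byproduct the two-sided localization of the spectrum of $X+Y^*Y$ in $[1-\beta,\,1+\beta]$, which is slightly more information than needed here. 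On the continuity bound, your argument (that $0 \preceq F^*F \preceq I$ and $0 \preceq G^*G \preceq I$ via the damped-projector structure, hence $0 \preceq X \preceq I$) is in fact a bit more complete than the paper's one-line justification, which cites only $X \preceq I$; the lower bound $X \succeq -I$ also needs the fact that the subtracted operators have norm at most one, which you supply.
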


\begin{proof}
For vectors $\xpre \in \text{Ker}(Y)$, we have,
\begin{equation}
\label{eqn:augmented_eigenvalues}
\xpre^* X \xpre = \xpre^* (X + Y^* Y) \xpre \ge \lambda_\text{min}(X + Y^* Y) ||\xpre||^2.
\end{equation}
This augmented operator has the following block structure,
\begin{equation*}
X + Y^*Y = \begin{bmatrix}I -F^*F \\ & I - G^*G\end{bmatrix} + \begin{bmatrix}F^* \\ G^*\end{bmatrix}\begin{bmatrix}F & G\end{bmatrix} = \begin{bmatrix}
I & F^*G \\ G^*F & I
\end{bmatrix}.
\end{equation*}
Thus the eigenvalues $\lambda$ of $X + Y^*Y$ satisfy,
\begin{equation*}
\begin{bmatrix}I & F^*G \\ G^*F & I\end{bmatrix}\begin{bmatrix}\upre \\ \etapre\end{bmatrix}=\lambda\begin{bmatrix}\upre \\ \etapre\end{bmatrix},
\end{equation*}
or,
\begin{equation}
\label{eqn:offdiagonal_block_eigenvalue_system}
\begin{bmatrix} & F^*G \\ G^*F & \end{bmatrix}\begin{bmatrix}\upre \\ \etapre\end{bmatrix}=(\lambda-1)\begin{bmatrix}\upre \\ \etapre\end{bmatrix}.
\end{equation}
Solving for $u$ from the block equation associated with the first row block of \eqref{eqn:offdiagonal_block_eigenvalue_system} and substituting into the second yields,
\begin{equation*}
G^*FF^*G \etapre = (\lambda-1)^2 \etapre.
\end{equation*}
Thus, the magnitudes of the shifted eigenvalues, $|\lambda-1|$, are the square roots of the eigenvalues of $G^*FF^*G$. By a similarity transform, the eigenvalues of $G^*FF^*G$ are the same as the eigenvalues of the operator $FF^*GG^*$, and by the second part of Assumption \ref{assume:amgm}, we know that these eigenvalues are bounded above by $\beta$. Thus,
\begin{equation*}
|\lambda-1| \le \lambda_\text{max}(FF^*GG^*)^{1/2} \le \beta.
\end{equation*}
which implies,
\begin{equation*}
1 -\beta \le \lambda,
\end{equation*}
so that,
\begin{equation*}
\xpre^* X \xpre \ge (1 - \beta) \nor{\xpre}^2,
\end{equation*}
from which the inf-sup bound directly follows.

Since $FF^*$ and $GG^*$ are damped projectors, their eigenvalues reside in the interval $[0,1]$, as do the eigenvalues of $F^*F$ and $G^*G$. Using the definition of $X$ in \eqref{eqn:defn_of_X_and_Y}, this implies that the singular values of $X$ reside in the interval $[0,1]$, and so we have the upper bound $||X|| \le 1$.
\end{proof}

\begin{prop}[Bound $c$ for $Y$]
\label{prop:constraint_bounds}
The singular values of the preconditioned constraint are bounded below, with bound,
\begin{equation*}
0 < \sqrt{2 \delta} \le \sigma_\text{min}(Y).
\end{equation*}
\end{prop}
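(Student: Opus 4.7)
The plan is to reduce the lower bound on $\sigma_{\min}(Y)$ directly to the arithmetic mean hypothesis in Assumption \ref{assume:amgm}(a), by observing that $YY^*$ is precisely twice the arithmetic mean of the two damped projectors $Q_R$ and $Q_J$.

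First I would recall that for any operator $Y$ between Hilbert spaces, $\sigma_{\min}(Y)^2 = \lambda_{\min}(YY^*)$, where the right-hand side is well defined because $YY^*$ is self-adjoint and positive semidefinite. Given the block form $Y = \begin{bmatrix} F & G \end{bmatrix}$, a direct blockwise computation gives
\begin{equation*}
YY^* = \begin{bmatrix} F & G \end{bmatrix}\begin{bmatrix} F^* \\ G^* \end{bmatrix} = FF^* + GG^* = Q_R + Q_J,
\end{equation*}
using the definitions of $Q_R$ and $Q_J$ from \eqref{eqn:QR}.

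Next I would apply Assumption \ref{assume:amgm}(a), which asserts $0 < \delta \le \tfrac{1}{2}\lambda_{\min}(Q_R + Q_J)$. Multiplying by $2$ and combining with the identity $\lambda_{\min}(YY^*) = \lambda_{\min}(Q_R + Q_J)$ yields
\begin{equation*}
2\delta \le \lambda_{\min}(YY^*) = \sigma_{\min}(Y)^2,
\end{equation*}
and taking square roots gives the desired bound $\sqrt{2\delta} \le \sigma_{\min}(Y)$.

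There is no real obstacle here: the proposition is essentially a restatement of Assumption \ref{assume:amgm}(a) in terms of singular values of $Y$, once one notices the algebraic identity $YY^* = Q_R + Q_J$. The only thing worth being careful about is that the identity is written in terms of $YY^*$ (not $Y^*Y$), since $Y$ maps $\mathcal{Q}\oplus\mathcal{U}$ into the adjoint space and we want the smallest singular value, which on a rectangular operator coincides with $\lambda_{\min}$ of the smaller of $Y^*Y$ and $YY^*$; here $YY^*$ acts on the (smaller) adjoint space, so $\sigma_{\min}(Y)^2 = \lambda_{\min}(YY^*)$ as used.
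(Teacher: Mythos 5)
Your proposal is correct and follows essentially the same route as the paper: both compute $YY^* = FF^* + GG^* = Q_R + Q_J$ and invoke Assumption \ref{assume:amgm}(a) to bound $\lambda_{\min}(YY^*)$ from below by $2\delta$. The only cosmetic difference is that the paper justifies identifying $\sigma_{\min}(Y)^2$ with $\lambda_{\min}(YY^*)$ by noting that $G$ (hence $Y$) has full row rank, whereas you argue via the dimension of the adjoint space; both observations serve the same purpose.
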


\begin{proof}
Since $G$ is invertible, $Y = \begin{bmatrix}F & G\end{bmatrix}$ has full row rank. Thus the singular values of $Y$ are the square roots of the eigenvalues of
\begin{equation*}
YY^* = FF^* + GG^*.
\end{equation*}
Recalling the arithmetic mean assumption (Assumption \ref{assume:amgm}a), we have
\begin{equation*}
0 < \delta \le \frac{1}{2}\lambda_\text{min}\left(FF^* + GG^*\right)
= \frac{1}{2}\lambda_\text{min}\left(YY^*\right),
\end{equation*}
or
\begin{equation*}
0 < \sqrt{2 \delta} \le \sigma_\text{min}(Y).
\end{equation*}
\end{proof}

\subsubsection{Well posedness, continuity, and conditioning of the preconditioned KKT operator, $E$}
\label{sec:kkt_upper_lower_cond}

\begin{prop}[Well posedness of $E$]
\label{prop:kkt_lower_bound}
The singular values of $E$ have the following lower bound:
\begin{equation*}
0 < \frac{2}{3} (1-\beta)\delta \le \sigma_\text{min}(E).
\end{equation*}
\end{prop}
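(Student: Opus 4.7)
The plan is to apply the abstract Brezzi bound (Corollary \ref{cor:l2_saddlepoint}) to the saddle point form \eqref{eqn:E_XY} of $E$, using the constants already supplied by Propositions \ref{prop:11_block_bounds} and \ref{prop:constraint_bounds}. First I would verify the hypotheses of Corollary \ref{cor:l2_saddlepoint}. The block $X = \diag(I - F^*F,\, I - G^*G)$ is self-adjoint; it is positive semidefinite because $F^*F$ and $G^*G$ share nonzero spectra with the damped projectors $FF^* = Q_R$ and $GG^* = Q_J$, whose eigenvalues lie in $[0,1]$. Proposition \ref{prop:11_block_bounds} then supplies the coercivity constant $a = 1-\beta$ on $\ker(Y)$ and the continuity constant $b = 1$, while Proposition \ref{prop:constraint_bounds} supplies the inf-sup constant $c = \sqrt{2\delta}$.

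Next I would invoke Corollary \ref{cor:l2_saddlepoint} to obtain $\sigma_\text{min}(E) \ge 1/\sigma_\text{max}(\Psi)$, and substitute the values of $a,b,c$ into the matrix $\Psi$. A short calculation yields
\begin{equation*}
\Psi \;=\; \begin{bmatrix}
\dfrac{1}{1-\beta} & \dfrac{2-\beta}{(1-\beta)\sqrt{2\delta}} \\[6pt]
\dfrac{2-\beta}{(1-\beta)\sqrt{2\delta}} & \dfrac{2-\beta}{2\delta\,(1-\beta)}
\end{bmatrix}.
\end{equation*}

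To finish, I would upper-bound $\sigma_\text{max}(\Psi)$ by the matrix $\infty$-norm (maximum absolute row sum), which is valid for any matrix and straightforward here since $\Psi$ has nonnegative entries. The larger of the two row sums simplifies to $\tfrac{2-\beta}{1-\beta}\bigl(\tfrac{1}{\sqrt{2\delta}} + \tfrac{1}{2\delta}\bigr)$. Using the elementary bound $2-\beta \le 2$ together with $\delta \le 1$ — the latter being a consequence of $Q_R$ and $Q_J$ having spectra in $[0,1]$, so that $\lambda_\text{min}(Q_R+Q_J) \le 2$ and hence $\delta \le 1$ — I would collapse the $\delta^{-1/2}$ term into a $\delta^{-1}$ term and arrive at $\sigma_\text{max}(\Psi) \le \tfrac{1+\sqrt{2}}{(1-\beta)\delta}$. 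Taking reciprocals gives the claimed bound.

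The main obstacle is the final simplification: one must be careful in using the bounds $2-\beta \le 2$ and $\sqrt{\delta}\le 1$ consistently to combine mixed $\delta^{-1}$ and $\delta^{-1/2}$ terms into the clean single $\delta^{-1}$ factor in the claim. Working directly from the square-root formula for $\sigma_\text{max}(\Psi)$ in Corollary \ref{cor:l2_saddlepoint} produces a messier estimate, which is why I prefer the $\nor{\Psi}_\infty$ route; the one subtlety to justify is that $\Psi$ itself is not positive semidefinite (its determinant is negative), so simpler bounds such as $\sigma_\text{max}(\Psi) \le \operatorname{tr}(\Psi)$ are unavailable.
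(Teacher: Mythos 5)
Your proposal follows the paper's own route: both apply Corollary \ref{cor:l2_saddlepoint} to the block form \eqref{eqn:E_XY} with the constants $a=1-\beta$, $b=1$, $c=\sqrt{2\delta}$ from Propositions \ref{prop:11_block_bounds} and \ref{prop:constraint_bounds}, and differ only in the final algebraic simplification. The paper substitutes these constants into the explicit square-root formula for $\sigma_\text{max}(\Psi)$ and then replaces $(1-\beta)$ and $\delta$ by $1$ throughout the denominator (valid since both lie in $(0,1]$), which collapses directly to $(1-\beta)\delta/(1+\sqrt{2})$; your route via $\sigma_\text{max}(\Psi)\le \nor{\Psi}_\infty$ is a legitimate alternative that avoids the messy radical. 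One small slip: it is not always the second row of $\Psi$ that has the larger sum --- the first row sum exceeds the second precisely when $\delta > 1-\beta/2$, which Assumption \ref{assume:amgm} does not preclude. This is harmless, because the first row sum is bounded by $\frac{1}{1-\beta}\left(1+\frac{\sqrt{2}}{\delta}\right)\le \frac{1+\sqrt{2}}{(1-\beta)\delta}$ using $\delta\le 1$, so both row sums obey the same estimate; you should simply bound the maximum over both rows rather than asserting which one dominates. Your parenthetical that $\Psi$ is indefinite (negative determinant), so that $\sigma_\text{max}(\Psi)$ must be handled as the spectral radius rather than via a trace bound, is correct.
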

\begin{proof}
Based on the results of Propositions \ref{prop:11_block_bounds} and \ref{prop:constraint_bounds}, and the block structure of $E$ from \eqref{eqn:E_XY}, we can apply bound \eqref{eqn:brezzi_improved} from Theorem \ref{thm:generic_saddle_point} to $E$ with $a = 1-\beta$, $b = 1$, and $c^2 = 2\delta$. Doing this and then using the fact that $0 < \delta \le 1$, we get the desired lower bound on the minimum singular value:
\begin{equation*}
\sigma_\text{min}(E) \ge \frac{1-\beta}{1 + \frac{1}{2 \delta}} = \frac{2 (1-\beta)\delta}{1 + 2 \delta} \ge \frac{2}{3} (1-\beta)\delta.
\end{equation*}
\end{proof}

\begin{prop}[Continuity of $E$]
\label{prop:kkt_upper_bound}
The singular values of $E$ are bounded above by $2$. I.e.,
\begin{equation*}
\sigma_\text{max}(E) \le 2.
\end{equation*}
\end{prop}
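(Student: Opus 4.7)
The plan is to exploit the block structure $E = \begin{bmatrix} X & Y^* \\ Y & 0 \end{bmatrix}$ from \eqref{eqn:E_XY} and bound $\|E\|$ via a direct blockwise estimate on $\|Ez\|^2$. Write an arbitrary vector in the domain of $E$ as $z = (x,\eta)$, with $\|z\|^2 = \|x\|^2 + \|\eta\|^2$. Then
\[
Ez = \bigl(Xx + Y^*\eta,\ Yx\bigr),
\qquad
\|Ez\|^2 = \|Xx + Y^*\eta\|^2 + \|Yx\|^2,
\]
so the task reduces to controlling three terms involving $X$ and $Y$ against the sizes of $x$ and $\eta$.

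The first ingredient is the operator norm bound $\|X\| \le 1$ already established in Proposition \ref{prop:11_block_bounds}. The second ingredient is $\|Y\|^2 \le 2$, which I will obtain from the damped projector structure: since $YY^* = FF^* + GG^* = Q_R + Q_J$ and each of $Q_R,Q_J$ has eigenvalues in $[0,1]$ (as noted right after \eqref{eqn:QJ_J}), we get $\|YY^*\| \le 2$ and hence $\|Y\| = \|Y^*\| \le \sqrt{2}$.

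The final step is to combine these bounds, taking care to recover the sharp constant $2$ rather than the looser $1+\sqrt{2}$ that a naive triangle inequality on the block splitting $E = \mathrm{diag}(X,0) + \bigl[\begin{smallmatrix}0 & Y^* \\ Y & 0\end{smallmatrix}\bigr]$ would give. Applying the elementary inequality $(a+b)^2 \le 2a^2 + 2b^2$ to $\|Xx+Y^*\eta\|^2$ and then inserting the bounds on $X$ and $Y$ gives
\[
\|Ez\|^2 \le 2\|Xx\|^2 + 2\|Y^*\eta\|^2 + \|Yx\|^2
       \le 2\|x\|^2 + 4\|\eta\|^2 + 2\|x\|^2
       = 4\|x\|^2 + 4\|\eta\|^2 = 4\|z\|^2,
\]
from which $\sigma_\text{max}(E) = \|E\| \le 2$ follows.

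There is no serious obstacle here; the only subtle point is choosing the right blockwise inequality so that the constant comes out as $2$ rather than $1+\sqrt{2}$. The use of $(a+b)^2 \le 2a^2+2b^2$ (equivalently, Young's inequality with the optimal weight) is what makes the factors of $2$ combine cleanly with $\|Y\|^2 \le 2$ and $\|X\|^2 \le 1$ to give exactly $4\|z\|^2$.
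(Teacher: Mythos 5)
Your proof is correct, and it takes a somewhat different route from the paper's. The paper works with the full $3\times 3$ block structure \eqref{eqn:blockwise_E}: it expands the bilinear form $z^*Ew$ into six terms, drops the contractions $F$, $G$, $I-F^*F$, $I-G^*G$ (all of norm at most one), and then groups the six resulting products into two triples, each bounded by $\nor{z}\nor{w}$ via Cauchy--Schwarz — yielding $|z^*Ew|\le 2\nor{z}\nor{w}$. You instead work with the coarser $2\times 2$ splitting $E=\left[\begin{smallmatrix} X & Y^*\\ Y & 0\end{smallmatrix}\right]$ and estimate $\nor{Ez}^2$ directly, using $\nor{X}\le 1$ (from Proposition \ref{prop:11_block_bounds}), $\nor{Y}^2=\lambda_\text{max}(Q_R+Q_J)\le 2$ (from the damped-projector bounds stated after \eqref{eqn:QJ_J}), and Young's inequality $(a+b)^2\le 2a^2+2b^2$; the arithmetic $2\nor{x}^2+4\nor{\eta}^2+2\nor{x}^2=4\nor{z}^2$ checks out and gives exactly $\nor{E}\le 2$. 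Your observation that the naive triangle inequality on this splitting would only give $1+\sqrt{2}$ is accurate, and the weight-one Young inequality is precisely what recovers the constant $2$. What each approach buys: the paper's six-term Cauchy--Schwarz grouping is more symmetric and makes transparent that the factor $2$ comes from the two "interleaved" copies of $\nor{z}\nor{w}$, while yours is shorter, reuses the already-proved norm bounds on $X$ and $YY^*$ rather than re-deriving contraction properties of $F$ and $G$ blockwise, and generalizes immediately to any saddle-point operator with $\nor{X}\le 1$ and $\nor{Y}^2\le 2$.
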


\begin{proof}
To prove the upper bound, we directly estimate the quantity $|w_1^*E w_2|$ for arbitrary $w_1,w_2$. Denote the blocks of $w_1$ and $w_2$ by,
\begin{equation*}
w_1  = \begin{bmatrix}\qpre_1 \\ \upre_1 \\ \etapre_1\end{bmatrix}, \quad w_2 = \begin{bmatrix}\qpre_2 \\ \upre_2 \\ \etapre_2\end{bmatrix}.
\end{equation*}
Recalling the blockwise definition of $E$ from \eqref{eqn:blockwise_E} and using the triangle inequality, we have
\begin{align}
|w_1^*E w_2| &= \left\lvert\begin{bmatrix}\qpre_1^* & \upre_1^* & \etapre_1^*\end{bmatrix} \begin{bmatrix}I - F^*F & & F^* \\ & I - G^*G & G^* \\ F & G\end{bmatrix}\begin{bmatrix}\qpre_2 \\ \upre_2 \\ \etapre_2\end{bmatrix} \right\rvert \nonumber \\
&= |\qpre_1^*(I-F^*F)\qpre_2 + \qpre_1^*F^*\etapre_2 +\upre_1^*(I-G^*G)\upre_2 + \upre_1^*G^*\etapre_2 + \etapre_1^*F\qpre_2 + \etapre_1^*G\upre_2| \nonumber \\
&\le |\qpre_1^*(I-F^*F)\qpre_2| + |\qpre_1^*F^*\etapre_2| + |\upre_1^*(I-G^*G)\upre_2| + |\upre_1^*G^*\etapre_2| + |\etapre_1^*F\qpre_2| + |\etapre_1^*G\upre_2|. \label{eqn:zEw0}
\end{align}
Since the operators $F$ and $G$ have singular values between zero and one, we can eliminate all of the intermediate operators in \eqref{eqn:zEw0}, yielding
\begin{equation}
\label{eqn:zEw1}
|w_1^*E w_2| \le  \nor{\qpre_1} \nor{\qpre_2} + \nor{\qpre_1} \nor{\etapre_2} + \nor{\upre_1} \nor{\upre_2} + \nor{\upre_1} \nor{\etapre_2} + \nor{\etapre_1} \nor{\qpre_2} + \nor{\etapre_1} \nor{\upre_2}.
\end{equation}
By Cauchy-Schwarz, three of the terms on the right hand side of \eqref{eqn:zEw1} can be estimated as follows:
\begin{align*}
\nor{\qpre_1} \nor{\qpre_2} + \nor{\upre_1} \nor{\etapre_2} + \nor{\etapre_1} \nor{\upre_2} &\le \left(\nor{\qpre_1}^2 + \nor{\upre_1}^2 + \nor{\etapre_1}^2\right)^{1/2}\left(\nor{\qpre_2}^2 + \nor{\upre_2}^2 + \nor{\etapre_2}^2\right)^{1/2} \\
&= \nor{w_1} \nor{w_2}.
\end{align*}
The other three terms can be estimated similarly:
\begin{equation*}
\nor{\qpre_1} \nor{\etapre_2} + \nor{\upre_1} \nor{\upre_2} + \nor{\etapre_1} \nor{\qpre_2} \le \nor{w_1} \nor{w_2}.
\end{equation*}
Thus we have the overall estimate
\begin{equation*}
|w_1^*E w_2| \le 2 \nor{w_1} \nor{w_2},
\end{equation*}
which implies $\sigma_\text{max}(E) \le 2$, as required.
\end{proof}

\begin{thm}[Conditioning of $E$]
\label{thm:cond_bound}
\begin{equation*}
\cond{E} \le \frac{3}{(1-\beta)\delta}.
\end{equation*}
\end{thm}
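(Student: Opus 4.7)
The plan is to combine directly the two preceding propositions, which already do all of the work. Since $E = P^{-1/2} K P^{-1/2}$ is the symmetric preconditioning of the (symmetric indefinite) KKT operator $K$, the operator $E$ is itself self-adjoint. Consequently its singular values coincide with the absolute values of its eigenvalues, and the condition number admits the standard ratio characterization
\begin{equation*}
\cond{E} = \frac{\sigma_\text{max}(E)}{\sigma_\text{min}(E)}.
\end{equation*}
I would state this observation as the opening line of the proof.

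Next, I would invoke Proposition \ref{prop:kkt_upper_bound} to replace the numerator by the continuity bound $\sigma_\text{max}(E) \le 2$, and Proposition \ref{prop:kkt_lower_bound} to replace the denominator by the well-posedness bound $\sigma_\text{min}(E) \ge (1-\beta)\delta/(1+\sqrt{2})$. Substituting these two estimates into the ratio and simplifying $2(1+\sqrt{2}) = 2 + 2\sqrt{2}$ gives exactly
\begin{equation*}
\cond{E} \le \frac{2}{(1-\beta)\delta/(1+\sqrt{2})} = \frac{2 + 2\sqrt{2}}{(1-\beta)\delta},
\end{equation*}
which is the claim.

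There is essentially no obstacle here: the nontrivial work has already been absorbed into Propositions \ref{prop:11_block_bounds}, \ref{prop:constraint_bounds}, \ref{prop:kkt_lower_bound}, and \ref{prop:kkt_upper_bound} (and, upstream of those, into Assumption \ref{assume:amgm} and Corollary \ref{cor:l2_saddlepoint}). The only point worth being careful about is the symmetry of $E$, which is what allows us to identify the condition number with the ratio of extremal singular values; this follows because $P$ is block diagonal and self-adjoint positive definite (so $P^{-1/2}$ is well-defined and self-adjoint) and because $K$ is self-adjoint by inspection of \eqref{eqn:KKT_matrix_3x3_2x2}. The entire proof should therefore fit in two or three lines of display math.
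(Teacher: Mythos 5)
Your proposal is correct and matches the paper's proof, which is simply ``Divide the upper bound from Proposition \ref{prop:kkt_upper_bound} by the lower bound from Proposition \ref{prop:kkt_lower_bound}.'' Your extra remarks on the self-adjointness of $E$ and the singular-value characterization of the condition number are fine but not needed beyond the standard definition $\cond{E} = \sigma_\text{max}(E)/\sigma_\text{min}(E)$ already in use.
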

\begin{proof}
Divide the upper bound from Proposition \ref{prop:kkt_upper_bound} by the lower bound from Proposition \ref{prop:kkt_lower_bound}.
\end{proof}

\section{Spectral filtering and appropriate regularization assumptions}
\label{sec:appropriate_regularization_full}

To better characterize the constants $\delta$ and $\beta$ in the
condition number bound in Theorem \ref{thm:cond_bound}, in this
section we propose \emph{appropriate regularization assumptions}
({\regassumption}) that limit the degree to which the inverse problem
can be over- or under- regularized. These assumptions are motivated by
an analysis of the error in the reconstruction of the parameter
(Sections \ref{sec:reduced_problem} and
\ref{sec:spectral_filtering_defn}), and apply to spectral filtering
regularization operators (Definition \ref{def:spectral_filter}). Since
one part of {\regassumption} (specifically, {\regassumption}b) is
novel, we discuss that part in greater detail. Much of the
development we present leading up to (but not including)
{\regassumption} mirrors the classical treatment presented in
\cite{EnglHankeNeubauer96}.

Since construction of spectral filtering regularization operators is too expensive for large scale inverse problems with highly informative data, {\regassumption} is used for theoretical analysis only. In Section \ref{sec:source_filter} we will prove that satisfying {\regassumption} implies the existence of good constants $\delta$ and $\beta$ for source inversion problems, thereby guaranteeing that our preconditioner will perform well on these problems. 

\subsection{The reduced problem and decomposition of error}
\label{sec:reduced_problem}
Although we take a full space approach for solving optimization
problem \eqref{eqn:linear_optimization_problem}, for the purpose of
analysis it is useful to consider the reduced version of the problem
in which the constraint is eliminated by viewing the state $u$ as
an implicit function of the parameter $q$ via solution of the state
equation. This yields the following unconstrained optimization problem
in $q$ only:
\begin{equation}
\label{eqn:reduced_problem}
\min_q \quad \frac{1}{2}\nor{Jq - y}^2 + \frac{\alpha}{2}\nor{Rq}^2,
\end{equation}
where we recall from \eqref{eqn:parameter_to_observable_map} that the
\emph{parameter-to-observable map} $J$ is defined as $J:=
-BA^{-1}T$. The solution $q$ to this reduced problem is the solution
to the normal equations, 
\begin{equation}
\label{eqn:normal_equations}
Hq = J^* y,
\end{equation}
where
\begin{equation}
\label{eqn:reduced_hessian}
H:=J^*J + \alpha R^*R
\end{equation}
is the Hessian of the reduced optimization problem
\eqref{eqn:reduced_problem}, which we call the \emph{reduced
  Hessian}. 
The reduced Hessian has been the target of much of the previous work
on preconditioners for inverse problems (see Section
\ref{sec:reduced_hessian_literature}), including the method we
numerically compare our preconditioner to in Section
\ref{sec:numerical_results}.

From an optimization perspective, the purpose of the regularization is
to make optimization problem \eqref{eqn:reduced_problem} well-posed by
introducing curvature in the objective function in directions that are
in the (numerical) null space of $J$. However, in the context of
inverse problems the regularization is primarily seen as a means of
stabilizing the inversion with respect to noise in the observations.

Recall from \eqref{eqn:obs_with_noise} that the observations we use
for the inversion are corrupted by additive noise $\noisevector$ via
the formula 
\begin{equation}
\label{eqn:reduced_obs}
y = y_\text{true} + \noisevector = J q_\text{true} + \noisevector,
\end{equation}
where $q_\text{true}$ is the unknown true parameter and $y_\text{true}=J q_\text{true}$ are the observations that would have been obtained if there were no noise. Substituting \eqref{eqn:reduced_obs} into \eqref{eqn:normal_equations} and then subtracting the result from $q_\text{true}$, we see that the error takes the form
\begin{equation*}
q_\text{true} - q = \errunder + \errover,
\end{equation*}
consisting of a term
\begin{equation}
\label{eqn:under_reg_error}
\errunder := - \left(J^*J + \alpha R^*R\right)^{-1}J^* \noisevector
\end{equation} 
that depends on the noise, and a term
\begin{equation}
\label{eqn:over_reg_error}
\errover := \left(I - \left(J^*J + \alpha R^*R\right)^{-1}J^*J\right) q_\text{true}
\end{equation} 
that does not. From the form of equations \eqref{eqn:under_reg_error}
and \eqref{eqn:over_reg_error}, a trade-off is evident: strengthening
the regularization tends to reduce $\errunder$ at the expense of
increasing $\errover$, and weakening the regularization tends to
reduce $\errover$ at the expense of increasing $\errunder$. To achieve a
good reconstruction of the parameter, it is desirable for both of
these terms to be as small in magnitude as possible. To investigate
this trade-off in more detail, we restrict our subsequent analysis to
the 
special case of spectral filtering regularization, which
we define and discuss in the following section. This will provide
convenient bases to diagonalize the operators $- \left(J^*J + \alpha
R^*R\right)^{-1}J^*$ and $\left(I - J^*J + \alpha
R^*R\right)^{-1}J^*J$, and hence allow us understand the errors
$\errunder$ and $\errover$ in a per-component manner.

\subsection{Spectral filtering regularization}
\label{sec:spectral_filtering_defn}

\begin{defn} 
\label{def:spectral_filter}
An operator $R$ is a spectral filtering regularization operator for a linear inverse problem with parameter-to-observable map $J$ if $R$ and $J$ share a common basis of right singular vectors. That is, there exist 
\begin{itemize}
\item unitary operators $U:\mathbb{R}^{\nobs}\rightarrow \mathcal{Y}$, $V:\mathbb{R}^{n_q}\rightarrow\mathcal{Q}$, and $\Phi:\mathbb{R}^{n_q} \rightarrow \mathcal{Q}$, and
\item non-negative diagonal operators $\Sigma_J = \text{diag}(d_\ind)_{\nobs,n_q}$, and $\Sigma_R = \text{diag}(r_\ind)_{n_q, n_q}$
\end{itemize} 
such that
\begin{equation}
\label{eqn:generalized_svd}
\begin{cases}
J = U \Sigma_J \Phi^*, \\
R = V \Sigma_R \Phi^*.
\end{cases}
\end{equation}
By convention we order the singular values $d_k$ of $J$ in descending
order ($d_k \ge d_{k+1}$). In the case where $\nobs < n_q$, for
convenience we define $d_k:=0$ for $k=\nobs+1,\dots,n_q$. Note that
the descending order for $d_k$ forces an order (possibly non-monotone)
for the singular values $r_k$ of $R$. We use $\parametermode_k$ to
denote the $k$th right singular vector shared by $J$ and $R$. That is,
$\Phi = \begin{bmatrix}\parametermode_1 & \parametermode_2 & \dots &
  \parametermode_{n_q}\end{bmatrix}$. 
\end{defn}

Spectral filtering regularization is ideally suited for inverse
problems---by manipulating the regularization singular values
$r_\ind$, one can selectively filter out undesirable components of the
parameter from the reconstruction without affecting the reconstruction
of the desirable components. The larger $r_\ind$, the more component
$\parametermode_\ind$ is penalized, and vice versa. Limiting cases of
spectral filtering regularization include:
\begin{itemize}
\item identity regularization ($R=I$), where all
singular vectors are penalized equally, and 
\item truncated SVD, where singular vectors $\parametermode_\ind$ are
  not penalized at all if $\parametereigenvalue_\ind$ is above a given
  threshold, but are penalized infinitely\footnote{That is, the
    reconstruction of the component of $q$ in the direction
    $\parametermode_\ind$ is set to zero.} otherwise.  
\end{itemize}
Spectral filtering regularization is routinely used for small to moderate sized inverse problems, and for large inverse problems that admit low-rank approximations to the parameter-to-observable map. However, aside from identity regularization, spectral filtering regularization is generally computationally infeasible for large-scale inverse problems with highly informative data. In fact, spectral filtering regularization requires computing the dominant singular vectors and singular values of $J$ in order to construct $R$, and the number of dominant singular vectors of $J$ scales with the informativeness of the data. Thus we view spectral filtering as an idealized form of regularization that practical regularization operators attempt to approximate. For a more
comprehensive discussion of spectral filtering and its relation to
other regularizations, we refer the reader to the
classic monograph \cite{EnglHankeNeubauer96}.

For spectral filtering regularization, we can formulate expressions
for the errors in the reconstruction on a per-component
manner. Substituting the singular value decomposition factors from
\eqref{eqn:generalized_svd} into the error
expressions from \eqref{eqn:under_reg_error} and
\eqref{eqn:over_reg_error}, and then performing some algebraic
manipulations, yields 
\begin{align}
\errunder &= -\Phi~ \text{diag}\left(\frac{d_\ind}{d_\ind^2 + \alpha r_\ind^2}\right) U^* \noisevector, \label{eqn:underreg_diagonalized} \\
\errover &= \Phi~ \text{diag}\left(\frac{\alpha r_\ind^2}{d_\ind^2 + \alpha r_\ind^2}\right) \Phi^* q_\text{true}. \label{eqn:overreg_diagonalized}
\end{align}
From \eqref{eqn:underreg_diagonalized}, we see that the regularization
should not be weak (small $\alpha r_\ind^2$) in directions
$\parametermode_\ind$ to which the observations are insensitive (small
$d_\ind^2$). Otherwise the noise associated with observations of those
directions will be highly amplified, leading to large errors. In such
a scenario we say that the problem is \emph{under-regularized}.

On the other hand, \eqref{eqn:overreg_diagonalized} shows that strong
regularization can also lead to large errors. In directions
$\parametermode_\ind$ for which observation data is lacking (or
dominated by noise), there is no hope to reconstruct the component of
the parameter in that direction, so some degree of error in $\errover$
is to be expected. However, if $d_\ind$ is large then the observations
are highly sensitive to changes to the parameter in direction
$\parametermode_\ind$, so it is likely that the observations
associated with direction $\parametermode_\ind$ contain more signal
than noise. That is, when $d_\ind$ is large, it is likely that the
component of the parameter $q_\text{true}$ in direction
$\parametermode_\ind$ can, in principle, be inferred from the
data. Hence, if the regularization is strong (large $\alpha r_\ind^2$)
in directions for which the parameter-to-observable map is also strong
(large $d_\ind^2$), the reconstruction will contain substantial
\emph{unnecessary} error due to the regularization. In this scenario
we say that the problem is \emph{over-regularized}. To simultaneously
avoid under- and over- regularization, the regularization should be
strong in directions where the parameter-to-observable map is weak,
and weak in directions where the parameter-to-observable map is
strong.

\subsection{Appropriate regularization assumptions}

In light of the preceding discussion of over- and under- regularization error and spectral filtering, we propose the following \emph{appropriate regularization assumptions}.

\begin{assume}[Appropriate regularization]
\label{condition:appropriate_regularization}
There exist constants $\reglower$ and $\regupper$  such that,
\begin{enumerate}[label=\alph*), topsep=1em, itemsep=1em]
\item \quad $ \displaystyle 0 < \reglower \le \parametereigenvalue_\ind^2 + \alpha r_\ind^2 $,
\item \quad $ \displaystyle \parametereigenvalue_\ind r_\ind \le \regupper < \infty $,
\end{enumerate}
for all $\ind=1,2,\dots,n_q$.
\end{assume}

{\regassumption}a is already required for linear optimization problem
\eqref{eqn:linear_optimization_problem} to be well-posed. It says that
the regularization cannot be arbitrarily small in basis directions
$\parametermode_\ind$ to which the observations are insensitive, but
allows the regularization to be arbitrarily small in directions
$\parametermode_\ind$ to which the observations are sensitive. In
contrast, {\regassumption}b prevents the regularization from being
large in basis directions $\parametermode_\ind$ to which the observations
are sensitive, but still allows the regularization singular values
to diverge ($r_\ind \rightarrow \infty$ as $\ind \rightarrow \infty$),
as long as the sensitivity of the observations to changes to the
parameter, $\parametereigenvalue_\ind$, goes to zero in the inverse
manner. Informally, {\regassumption}a says that the problem is not
under-regularized, and {\regassumption}b says that the problem is not
over-regularized. 

Since {\regassumption}a is standard, we do not discuss it further. The
motivation for {\regassumption}b is less obvious, so we provide a more
in-depth discussion of it. To begin with, the multiplicative nature of
{\regassumption}b makes it a relatively weak assumption compared to
other possible candidates for preventing over-regularization. In
particular, observe that the eigenvalues of the regularization
preconditioned reduced Hessian, $R^{-*} H R^{-1}$, are
$\parametereigenvalue_\ind^2/r_\ind^2 + \alpha$.  Thus situations 
in which the strength of the regularization operator on a mode is
inversely proportional to how informed that mode is (i.e., $r_\ind
\approx \frac{1}{\parametereigenvalue_\ind}$) can lead to arbitrarily
poor conditioning of the regularization preconditioned reduced Hessian
while still satisfying {\regassumption}b with a constant of order one.

An instructive model problem that illustrates {\regassumption}b is the
Poisson source inversion problem on a rectangular domain, with
Laplacian regularization, zero Dirichlet boundary conditions for both
$A$ and $R$, and distributed observations of the first $\nobs$ Fourier
modes of the state variable in the domain. That is,
\begin{itemize}
\item $T=I$,
\item  $A=R=\Delta_D$, where $\Delta_D$ is the Laplacian operator with zero Dirichlet boundary conditions, and 
\item $B:\mathcal{U} \rightarrow \mathbb{R}^\nobs$ is a wide rectangular operator with Fourier modes as right singular vectors (the same as $A$ and $R$), but with singular values $\sigma_\ind = 1$, $k=1,\dots,\nobs$. 
\end{itemize}
Recalling that $J=-BA^{-1}T=-B \Delta_D^{-1}$, we see that
\begin{equation*}
\parametereigenvalue_\ind = \begin{cases}\frac{1}{\lambda_\ind}, &\quad \ind=1,\dots,\nobs, \\
0, &\quad \ind > \nobs,
\end{cases}
\end{equation*}
where $\lambda_\ind$ is the $\ind$th eigenvalue of the Laplacian
$\Delta_D$. At the same time, the singular values of $R$ are $r_\ind =
\lambda_\ind$. Thus $\parametereigenvalue_\ind r_\ind = 1$ for
$\ind=1,\dots,\nobs$ and $\parametereigenvalue_\ind r_\ind=0$ for $k>\nobs$,
so {\regassumption}b holds with constant $\regupper = 1$, regardless
of the number of observations, $\nobs$.

\section{Analysis of the source inversion problem with spectral filtering regularization}
\label{sec:source_filter}

In Section \ref{sec:am_gm_conditions} we hypothesized that the damped
projector arithmetic and geometric mean assumptions (Assumption
\ref{assume:amgm}) are satisfied with good constants $\delta$ and
$\beta$ whenever an inverse problem is properly regularized. Then in Section \ref{sec:appropriate_regularization_full} we formulated another assumption ({\regassumption}) that quantifies the concept of proper regularization for spectral filtering regularization operators. Here we show that {\regassumption} implies Assumption \ref{assume:amgm} for the source inversion problem. Specifically, in Theorem \ref{thm:reg_to_amgm} and Corollary \ref{cor:reg_to_amgm} we prove quantitative bounds on the constants $\delta$ and $\beta$ for source inversion problems that are neither over- nor under- regularized in the manner made precise by {\regassumption}. The more appropriate to the problem the regularization is, the better the bounds.

\begin{defn} 
\label{def:source}
An inverse problem is a source inversion problem if the parameter $q$ being inverted for is the right-hand-side of the state equation. That is, $T=-I$, and state equation \eqref{eqn:state_equation} takes the form,
\begin{equation*}
Au = q.
\end{equation*}
\end{defn}

\begin{thm}
\label{thm:reg_to_amgm}
Let $R$ be a spectral filtering regularization operator for a source inversion problem (see Definitions \ref{def:spectral_filter} and \ref{def:source}). If $R$ satisfies appropriate regularization {\regassumption} with constants $\reglower$ and $\regupper$, then Assumption \ref{assume:amgm} is also satisfied, with constants
\begin{equation*}
\delta = \frac{1}{2}\left(1 + \frac{\alpha}{\rho^2}\regupper^2\right)^{-1} \quad \text{and} \quad
\beta = \left(1 + \frac{1}{\rho} \reglower \right)^{-1/2}.
\end{equation*}
\end{thm}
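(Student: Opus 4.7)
The plan is to exploit the fact that, for a source inversion problem ($T=-I$), the spectral filtering structure of Definition \ref{def:spectral_filter} lets one simultaneously diagonalize both damped projectors $Q_R$ and $Q_J$ in the common basis $\Phi$. First I would substitute $T=-I$ and the decompositions $J=U\Sigma_J\Phi^*$, $R=V\Sigma_R\Phi^*$ into \eqref{eqn:QR} and the invertible-$T$ form \eqref{eqn:QJ_J} of $Q_J$, obtaining
\begin{equation*}
Q_R = \Phi\left(\frac{\alpha}{\rho}\Sigma_R^2 + I\right)^{-1}\Phi^*, \qquad Q_J = \Phi\left(\frac{1}{\rho}\Sigma_J^2 + I\right)^{-1}\Phi^*,
\end{equation*}
so $Q_R$ and $Q_J$ commute and have simultaneous eigenvalues $\rho/(\rho + \alpha r_k^2)$ and $\rho/(\rho + d_k^2)$ on $\phi_k$. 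This reduces Assumption \ref{assume:amgm} to scalar inequalities in $r_k$ and $d_k$.

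The geometric-mean bound then follows almost immediately. I would expand the $k$th eigenvalue of $Q_R Q_J$ as
\begin{equation*}
\frac{\rho^2}{(\rho + \alpha r_k^2)(\rho + d_k^2)} = \frac{\rho^2}{\rho^2 + \rho(d_k^2 + \alpha r_k^2) + \alpha r_k^2 d_k^2} \le \frac{\rho^2}{\rho^2 + \rho c_u},
\end{equation*}
dropping the nonnegative cross term and applying {\regassumption}a. Taking square roots gives $\lambda_\text{max}(Q_R Q_J)^{1/2} \le (1 + c_u/\rho)^{-1/2}$, which is the claimed $\beta$.

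The arithmetic-mean bound is the main obstacle, because it must turn the \emph{product} bound $d_k r_k \le c_o$ of {\regassumption}b into an \emph{additive} lower bound on the eigenvalues of $Q_R + Q_J$. Setting $x = \alpha r_k^2/\rho$ and $y = d_k^2/\rho$, so that $xy \le \alpha c_o^2/\rho^2$, the heart of the argument is the scalar inequality
\begin{equation*}
\frac{1}{1+x} + \frac{1}{1+y} \ge \frac{1}{1+xy} \qquad (x,y \ge 0),
\end{equation*}
which I would verify by clearing denominators: the difference $(2+x+y)(1+xy) - (1+x+y+xy)$ simplifies to $1 + xy(1+x+y) \ge 0$. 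Combining this with the product bound and the monotonicity of $t \mapsto 1/(1+t)$ yields $\lambda_\text{min}(Q_R + Q_J) \ge \rho^2/(\rho^2 + \alpha c_o^2)$, and dividing by two gives the stated $\delta$.
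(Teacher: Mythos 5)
Your proposal is correct and follows essentially the same route as the paper: both diagonalize $Q_R$ and $Q_J$ simultaneously in the shared right-singular-vector basis, bound the geometric mean by dropping the nonnegative cross term and invoking {\regassumption}a, and bound the arithmetic mean via the scalar inequality $\tfrac{1}{1+x}+\tfrac{1}{1+y}\ge\tfrac{1}{1+xy}$ together with {\regassumption}b. The only difference is that you verify that scalar inequality explicitly by clearing denominators, whereas the paper asserts it tersely ("combined fractions\dots monotonicity of $f(x)=x/(a+x)$"); your version is a welcome clarification of the same step.
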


\begin{proof}
For $\delta$, we seek a lower bound on the eigenvalues of the
arithmetic mean of the damped projectors $Q_R$ and $Q_J$ (as defined
in \eqref{eqn:QR} and \eqref{eqn:QJ_J}, respectively), while for
$\beta$ we seek an upper bound on their geometric mean. For source
inversion problems these damped projectors take the form 
\begin{equation*}
Q_R = \left(\frac{\alpha}{\rho} R^*R + I\right)^{-1}  \quad \text{and} \quad
Q_J = \left(\frac{1}{\rho} J^*J + I\right)^{-1}.
\end{equation*}
Furthermore, for spectral filtering regularization, $R^*R$ and $J^*J$
share the same eigenvectors, and have eigenvalues $r_\ind^2$ and
$\parametereigenvalue_\ind^2$, respectively. Thus the eigenvalues
$\delta_\ind$ of the arithmetic mean $\tfrac{1}{2}(Q_R+Q_J)$ can be estimated as
\begin{equation*}
\delta_\ind = \frac{1}{2}\left(\frac{1}{\frac{\alpha}{\rho} r_\ind^2 + 1} + \frac{1}{\frac{1}{\rho} \parametereigenvalue_\ind^2 + 1}\right) 
\ge \frac{1}{2}\left(1 + \frac{\alpha}{\rho^2}\parametereigenvalue_\ind^2 r_\ind^2\right)^{-1} 
\ge \frac{1}{2}\left(1 + \frac{\alpha}{\rho^2}\regupper^2\right)^{-1}.
\end{equation*}
In the first inequality we have combined fractions, and used the non-negativity of $r_\ind^2, \parametereigenvalue_\ind^2$ and monotonicity of the function $f(x)=x/(a+x)$. In the second inequality we have used {\regassumption}b.

Similarly, we use the {\regassumption}a to bound the eigenvalues
$\beta_\ind$ of the geometric mean $(Q_R Q_J)^{1/2}$ as  
\begin{equation*}
\beta_\ind = \left(\frac{1}{\frac{\alpha}{\rho}r_\ind^2 + 1} \cdot \frac{1}{\frac{1}{\rho}\parametereigenvalue_\ind^2 + 1}\right)^{1/2}
\le \left(1 + \frac{\alpha}{\rho}r_\ind^2 + \frac{1}{\rho}\parametereigenvalue_\ind^2\right)^{-1/2}
\le \left(1 + \frac{1}{\rho}\reglower\right)^{-1/2}.
\end{equation*}
\end{proof}

The following corollary of Theorem \ref{thm:reg_to_amgm} shows that the preconditioner will be effective in the low to moderate regularization regime ($\alpha \le 1$) if we choose $\rho=\sqrt{\alpha}$.

\begin{cor}
\label{cor:reg_to_amgm}
If the conditions of Theorem \ref{thm:reg_to_amgm} are satisfied, and $\alpha \le 1$, and the regularization parameter is chosen as $\rho=\sqrt{\alpha}$, then Assumption \ref{assume:amgm} is satisfied, with constants 
\begin{equation*}
\delta = \frac{1}{2}\left(1 + \regupper^2\right)^{-1} \quad \text{and} \quad
\beta = \left(1 + \reglower\right)^{-1/2}.
\end{equation*}
\end{cor}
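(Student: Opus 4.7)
The plan is to take the two formulas
\begin{equation*}
\delta(\rho) = \tfrac{1}{2}\left(1 + \tfrac{\alpha}{\rho^2}c_o^2\right)^{-1}, \qquad \beta(\rho) = \left(1 + \tfrac{1}{\rho}c_u\right)^{-1/2}
\end{equation*}
already supplied by Theorem \ref{thm:reg_to_amgm} and simply specialize them to $\rho = \sqrt{\alpha}$, invoking the hypothesis $\alpha \le 1$ to tidy up the residual $\alpha$-dependence in $\beta$. No fresh spectral analysis is required, since the work of relating the eigenvalues of $Q_R + Q_J$ and $Q_R Q_J$ to the shared singular pair $(d_k, r_k)$ is already encapsulated in Theorem \ref{thm:reg_to_amgm}.

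For the arithmetic-mean constant $\delta$, I would plug $\rho^2 = \alpha$ into the first formula. The factor $\alpha/\rho^2$ collapses to $1$, delivering $\delta = \tfrac{1}{2}(1 + c_o^2)^{-1}$ on the nose. The hypothesis $\alpha \le 1$ is not needed for this piece, and $\delta$ is strictly positive because $c_o < \infty$ by {\regassumption}b.

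For the geometric-mean constant $\beta$, substituting $\rho = \sqrt{\alpha}$ produces the intermediate bound $(1 + c_u/\sqrt{\alpha})^{-1/2}$, which still depends on $\alpha$. Here I would use $\alpha \le 1$, which gives $1/\sqrt{\alpha} \ge 1$, and combined with $c_u > 0$ (guaranteed by {\regassumption}a) yields $1 + c_u/\sqrt{\alpha} \ge 1 + c_u$. Taking the $-\tfrac{1}{2}$ power reverses the inequality and chains with the bound from Theorem \ref{thm:reg_to_amgm} to give
\begin{equation*}
\lambda_\text{max}(Q_R Q_J)^{1/2} \le \left(1 + \tfrac{c_u}{\sqrt{\alpha}}\right)^{-1/2} \le (1 + c_u)^{-1/2}.
\end{equation*}
Since $c_u > 0$, the proposed $\beta = (1+c_u)^{-1/2}$ is strictly less than $1$, so Assumption \ref{assume:amgm}b is verified.

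I do not anticipate any genuine obstacle; the corollary is a bookkeeping specialization of Theorem \ref{thm:reg_to_amgm}. The only point calling for minor care is the direction of the inequality when passing from $c_u/\sqrt{\alpha}$ to $c_u$: we are \emph{weakening} (enlarging) the candidate $\beta$, which is harmless because Assumption \ref{assume:amgm}b is an upper-bound requirement.
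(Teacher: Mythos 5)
Your proposal is correct and follows essentially the same route as the paper: substitute $\rho=\sqrt{\alpha}$ into the constants from Theorem \ref{thm:reg_to_amgm}, observe that $\alpha/\rho^2=1$ for the $\delta$ bound, and use $\alpha\le 1$ together with $c_u\ge 0$ to relax $(1+c_u/\sqrt{\alpha})^{-1/2}$ to $(1+c_u)^{-1/2}$ for the $\beta$ bound. Your added remark that $c_u>0$ guarantees $\beta<1$ is a small but welcome bit of extra care beyond what the paper states explicitly.
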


\begin{proof}
Substituting in $\rho = \sqrt{\alpha}$ into the results of Theorem
\ref{thm:reg_to_amgm}, we immediately have the desired lower bound on
the arithmetic mean of damped projectors with constant $\delta =
\frac{1}{2}\left(1 + \regupper^2\right)^{-1}$. For the geometric mean,
Theorem \ref{thm:reg_to_amgm} implies 
\begin{equation*}
\lambda_\text{max}\left(Q_R Q_J\right)^{1/2} \le \left(1 + \alpha^{-1/2} \reglower\right)^{-1/2}.
\end{equation*}
But note that for $\alpha \le 1$ we have
\begin{equation}
\label{eqn:drop_alpha_dependence}
\left(1 + \alpha^{-1/2}\reglower\right)^{-1/2} \le (1 + \reglower)^{-1/2},
\end{equation}
and so we get the desired upper bound with $\beta = (1 + \reglower)^{-1/2}$.
\end{proof}


\section{Numerical results}
\label{sec:numerical_results}

We apply our method to a Poisson source inversion problem with
pointwise observations randomly distributed throughout a rectangular
domain $\Omega=[0,1.45] \times [0,1]$, using Laplacian
regularization. Specifically, we take $q$, $u$, and $v$ to reside in
the space of continuous piecewise linear functions on a uniform
triangular mesh with mesh size parameter $h$, with the $L^2$ inner product. The state equation
\begin{equation*}
Au := \Delta_D u = q,
\end{equation*}
is the Poisson equation discretized by the finite element method, with
homogeneous Dirichlet boundary conditions enforced by the symmetric Nitsche
method \cite{Nitsche71}. Pointwise observations of the form
\begin{equation*}
y_\ind = u(x_\ind),
\end{equation*} 
are taken for a collection of points $\{x_\ind \in
\Omega\}_{\ind=1}^{\nobs}$, shown in Figure
\ref{fig:true_exact_obs}. Noise is not included in the inverse problem
since we are interested in preconditioners for the low noise, high
data, small regularization limit. The regularization operator is
defined by 
\begin{equation*}
R^*R := \Delta_N + t I,
\end{equation*}
where $\Delta_N$ is the Laplacian operator with Neumann boundary conditions discretized by the finite element method, and $t=1/10$. The combined operator $R^*R$ is used directly; in fact, the solution algorithm does not require $R$ explicitly.\footnote{Both $A$ and $R^*R$ should be viewed as finite dimensional discretizations of densely defined unbounded operators acting $L^2(\Omega)$.}

\begin{figure}[htbp]
\centering
\includegraphics[scale=0.5]{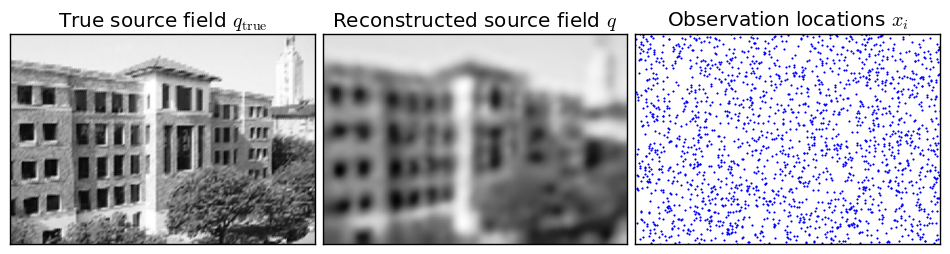}
\caption{Left: True source field $q_\text{true}$ used for all inversions. Center: Reconstruction $q$ for the case of $\nobs=2000$ observations with regularization parameter $\alpha=10^{-8}$ and mesh size $h=\sqrt{2} \cdot 10^{-2}$. Right: Observation locations $x_\ind$, denoted by dots.}
\label{fig:true_exact_obs}
\end{figure}

The true source field, $q_\text{true}$, used to generate the
observations, $y_\ind$, is a grayscale image of the Peter O'Donnell
Jr.\ building at the University of Texas at Austin, scaled to contain
values in $[0,1]$, and shown in Figure \ref{fig:true_exact_obs}. The
combination of sharp edges and smooth features in this image make this
an ideal test case for highly informative data and small
regularization. 

Abstract vectors $q,u,\adjoint$ are represented concretely by lists of
nodal degrees of freedom
$\concrete{q},\concrete{u},\concrete{\adjoint}$, respectively. The norm of a concrete vector, e.g., $\nor{\concrete{q}}$, is the Euclidean norm (square root of the sum of the squares of the entries). Since we use uniform meshes and present only relative errors, this is spectrally equivalent to using the function space $L^2$ norm on the underlying function being represented by the concrete vector. We use
the FEniCS \cite{LoggMardalGarth12} 
package to assemble
concrete matrix representations of $A$, $R^*R$, $T$, and $I$, which
are denoted $\concrete{A}$, $\concrete{R^*R}$, $\concrete{T}$, and
$\concrete{W}$, respectively. The diagonal lumped mass matrix is
denoted $\concrete{W}_L$, with diagonal entries given by row sums of
the mass matrix: $\left(\concrete{W}_L\right)_{ii} = \sum_j
\concrete{W}_{ij}$. The concrete sparse matrix representation of the
observation operator is denoted $\concrete{B}$. Its $(i,j)$ entry,
$\concrete{B}_{ij}$, equals the evaluation of the $j$th basis function
at the $i$th observation location. 

In a concrete basis, the KKT operator \eqref{eqn:KKT_matrix_full} becomes,
\begin{equation}
\label{eqn:discrete_kkt}
\begin{bmatrix}
\alpha \concrete{R^*R} & & -\concrete{W} \\ & \concrete{B}^T\concrete{B} & \concrete{A}^T \\ -\concrete{W} & \concrete{A}
\end{bmatrix}
\begin{bmatrix}
\concrete{q} \\ \concrete{u} \\ \concrete{\adjoint}
\end{bmatrix}
=
\begin{bmatrix}
0 \\ \concrete{B}^T y \\ 0
\end{bmatrix}.
\end{equation}
The reconstructed function $q$ based on the exact\footnote{By
  ``exact,'' we mean that the result of a computation is accurate to
  tolerance $10^{-12}$ or smaller.} solution of this KKT system with
regularization parameter $\alpha=10^{-8}$ is shown in Figure
\ref{fig:true_exact_obs}.  

In a concrete basis the preconditioner \eqref{eqn:preconditioner} becomes
\begin{equation}
\label{eqn:BDAL_exact}
\concrete{P} = \begin{bmatrix}
\alpha \concrete{R^*R} + \rho \concrete{W} \\
& \concrete{B}^T\concrete{B} + \rho \concrete{A}^T \concrete{W}^{-1} \concrete{A} \\
&& \frac{1}{\rho} \concrete{W}
\end{bmatrix}.
\end{equation}
In our numerical experiments, we consider three variants of this preconditioner.
\begin{itemize}
\item \textbf{BDAL, exact}: all solves in preconditioner \eqref{eqn:BDAL_exact} are performed exactly.
\item \textbf{BDAL, lumped mass, exact}: the mass matrix $\concrete{W}$ is replaced with the lumped mass matrix $\concrete{W}_L$, but preconditioner solves are performed exactly with this replacement.
\item \textbf{BDAL, lumped mass, multigrid}: the mass matrix is
  replaced by the lumped mass matrix, and the solves for $\alpha
  \concrete{R^*R} + \rho \concrete{W}_L$ and
  $\concrete{B}^T\concrete{B} + \rho \concrete{A}^T
  \concrete{W}_L^{-1} \concrete{A}$ are replaced by a small number of
  algebraic multigrid V-cycles. 
\end{itemize} 
For algebraic multigrid we use the root-node smoothed aggregation \cite{OlsonSchroderTuminaro11, VanekMandelBrezina96} method implemented in PyAMG \cite{BeOlSc2008}, with the default settings. One V-cycle is used for $\alpha \concrete{R^*R} + \rho \concrete{W}_L$, and three V-cycles are used for $\concrete{B}^T\concrete{B} + \rho \concrete{A}^T \concrete{W}_L^{-1} \concrete{A}$.

\subsection{Convergence comparison}

\begin{figure}[htbp]
\centering
\includegraphics[scale=0.65]{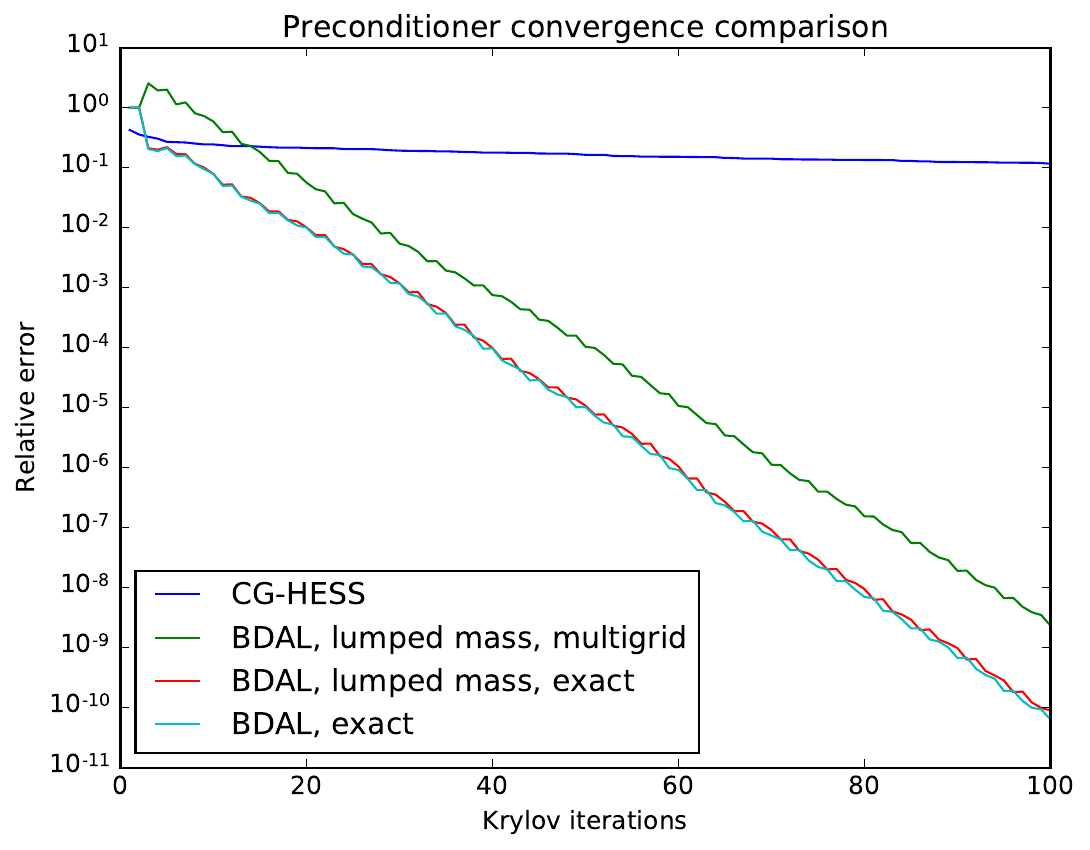}
\caption{Relative error in the parameter, $\nor{\concrete{q} - \concrete{q}_k}/\nor{\concrete{q}}$, for the high data Poisson source inversion problem, as a function of the number of Krylov iterations. The observation locations, regularization parameter, and mesh size are the same as in Figure \ref{fig:true_exact_obs} ($\nobs=2000$, $\alpha=10^{-8}$, $h=\sqrt{2} \cdot 10^{-2}$).}
\label{fig:convergence_comparison}
\end{figure}

\begin{figure}[htbp]
\centering
\includegraphics[scale=0.5]{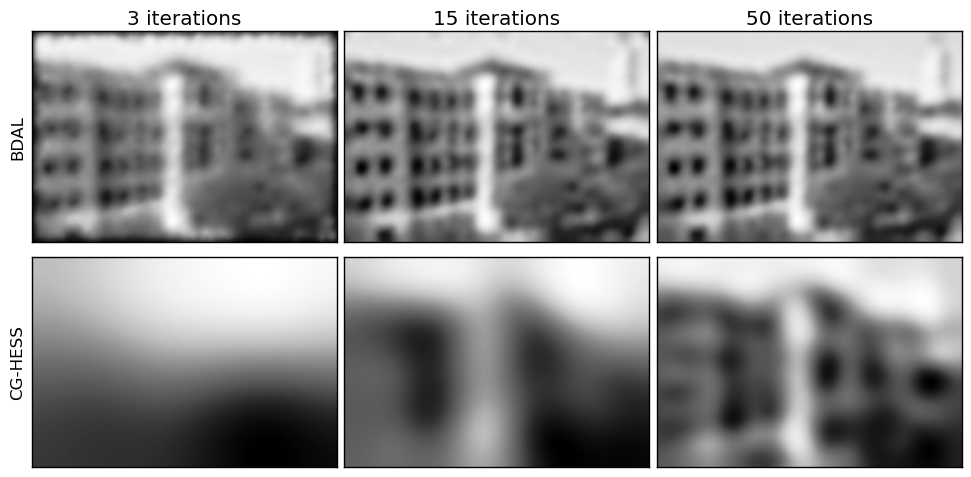}
\caption{Visual comparison of the 3rd, 15th, and 50th Krylov iterates
($\nobs=2000$,
  $\alpha=10^{-8}$, $h=\sqrt{2} \cdot 10^{-2}$). Top row:
  reconstruction using MINRES on the KKT system with our ``BDAL, lumped
  mass, exact'' preconditioner. Bottom row: reconstruction using CG on
  the reduced Hessian with regularization preconditioning.}
\label{fig:krylov_iterates_comparison}
\end{figure}

In Figure \ref{fig:convergence_comparison}, we show a convergence
comparison between between MINRES on the KKT system preconditioned by
our block diagonal augmented Lagrangian preconditioner, and conjugate
gradient on the reduced Hessian preconditioned by the regularization
term (CG-HESS). For our block diagonal augmented Lagrangian
preconditioner, we also show results for lumped mass and algebraic
multigrid approximations to the subsystems being solved. The
regularization, forward, and adjoint solves used for the reduced
Hessian solve are all performed exactly. The mesh size is $h=\sqrt{2}
\cdot 10^{-2}$, the number of observations is $2000$, and the
regularization parameter is $\alpha=10^{-8}$. Error is measured with
respect to the converged solution to the linear system
\eqref{eqn:discrete_kkt}, i.e.,
$\nor{\concrete{q}-\concrete{q}_k}/\nor{\concrete{q}}$. This allows us
to make a fair comparison between the reduced and full space methods.

In terms of Krylov iteration count, our preconditioner far outperforms
regularization preconditioning on the reduced Hessian. The error in
our method after three iterations is much less than the error after 50
iterations of regularization preconditioning on the reduced
Hessian. Performance with the lumped mass approximation is almost
identical to performance with exact solves. In the case with the multigrid
approximation, we see roughly the same asymptotic convergence rate as
the exact solve, but with a lag of $10$ to $20$ iterations. In our
numerical experiments we also observed that MINRES with our ``BDAL,
lumped mass, multigrid'' preconditioner takes considerably less time
per iteration than CG on the reduced Hessian, which is expected since
applying the reduced Hessian requires solving the forward and adjoint
equations to a high tolerance within each CG iteration. 

In Figure \ref{fig:krylov_iterates_comparison}, we see that the
reconstruction using the reduced Hessian starts off smooth, then
slowly includes information from successively higher frequency
parameter modes as the CG iterations progress. In contrast, our
preconditioner applied to the KKT system reconstructs low and high
frequency information simultaneously.

\subsection{Mesh scalability}

\begin{table}
\caption{Mesh scalability study for our ``BDAL, lumped mass, exact''
  preconditioner over a range of meshes. The table shows the number of
  MINRES iterations required to achieve parameter convergence to
  relative error $10^{-5}$.
The number of observations is $\nobs=2000$, and the regularization
parameter is $\alpha=10^{-8}$. The observation locations $x_\ind$ are
the same for all mesh sizes.}  \centering
\begin{tabular}{|c|c|c|}
\hline
$h$ & \# triangles & MINRES iterations \\
\hline
5.68e-02 & $1800$ & $51$ \\
2.84e-02 & $7200$ & $50$ \\
1.89e-02 & $16200$ & $51$ \\
1.41e-02 & $29000$ & $51$ \\
1.13e-02 & $45250$ & $51$ \\
9.44e-03 & $65100$ & $51$ \\
8.09e-03 & $88550$ & $51$ \\
7.07e-03 & $116000$ & $51$ \\
6.29e-03 & $146700$ & $51$ \\
5.66e-03 & $181000$ & $51$ \\
\hline
\end{tabular}
\label{tbl:mesh_scalability}
\end{table}

To test mesh scalability, we solve the Poisson source inversion
problem on a sequence of progressively finer meshes using MINRES with
our block diagonal augmented Lagrangian preconditioner.  The same
regularization parameter, $\alpha=10^{-8}$, and observation locations,
$\{x_\ind\}_{\ind=1}^{2000}$, are used for all meshes. The numbers of
iterations $k$ required to achieve a relative error of
$\nor{\concrete{q} - \concrete{q}_k}/\nor{\concrete{q}} < 10^{-5}$ are
shown in Table \ref{tbl:mesh_scalability}. All meshes are uniform
triangular meshes. The coarsest mesh has size $h=5.7 \cdot 10^{-2}$ with
$1,800$ triangles, and the finest mesh has $h=5.7 \cdot 10^{-3}$ with
$181,000$ triangles. To quantify the error, the exact solution 
$\concrete{q}$ was computed for each mesh using a sparse
factorization of the KKT matrix. All results are based on the
lumped mass approximation for mass matrices within the preconditioner.

The results clearly demonstrate mesh independence. The number of
MINRES iterations required remains essentially constant over a two
orders of magnitude increase in problem size, differing by at most one
iteration across all mesh sizes. 

\subsection{Regularization and data scalability}

\begin{figure}[htbp]
\centering
\includegraphics[scale=0.7]{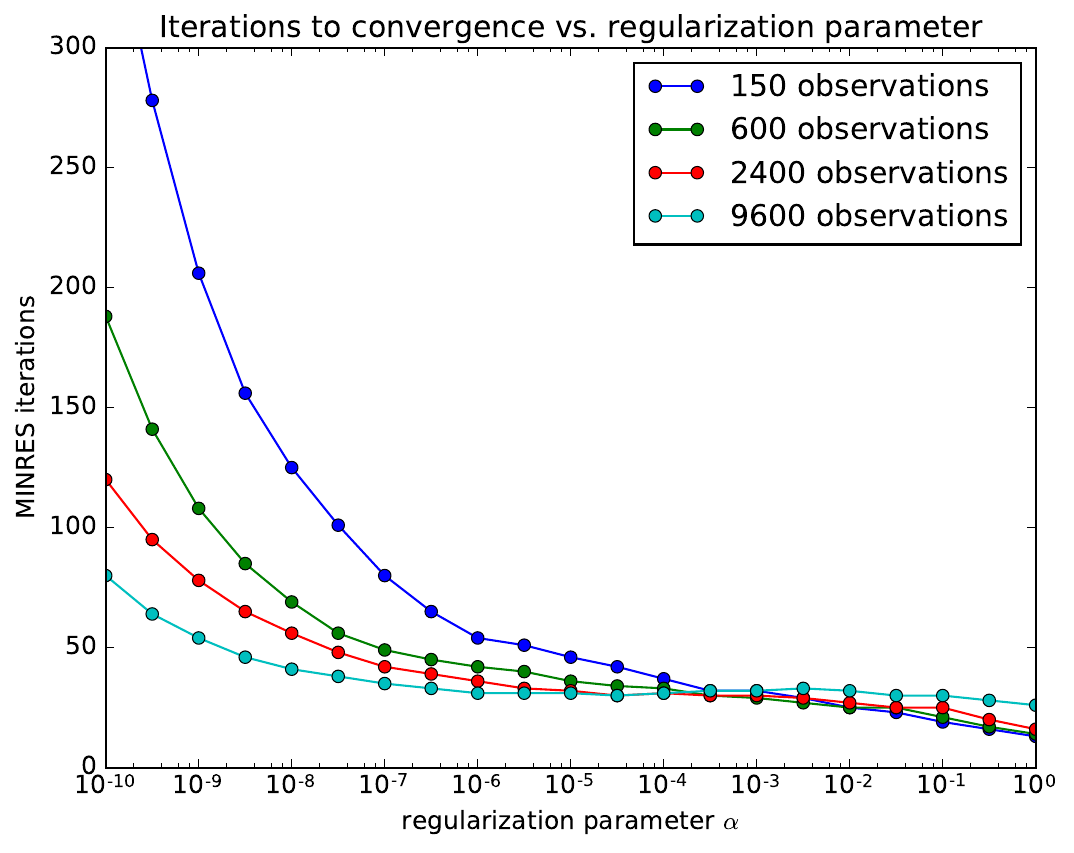}
\caption{Regularization and data scalability study for our ``BDAL,
  lumped mass, exact'' preconditioner. Plot shows the number of MINRES
  iterations $k$ required to achieve relative error $\nor{\concrete{q}
    - \concrete{q}_k}/\nor{\concrete{q}} < 1e-5$.} 
\label{fig:regularization_and_data_robustness}
\end{figure}

A data and regularization robustness study is shown in Figure \ref{fig:regularization_and_data_robustness}. The number of MINRES iterations $k$ required for the method to converge to an error $\nor{\concrete{q} - \concrete{q}_k}/\nor{\concrete{q}} < 10^{-5}$ is plotted for values of the regularization parameter in the range $\alpha \in [10^{-10}, 1.0]$, and number of observations $\nobs \in \{150, 600, 2400, 9600\}$. The mesh size is fixed at $h=\sqrt{2} \cdot 10^{-2}$, and for each value of $\nobs$, the observation locations, $x_\ind$, are fixed as the regularization parameter varies. 

The overall performance of the preconditioner is relatively steady
over a broad range of values of $\alpha$ and $\nobs$. The performance
of the method does decrease as the regularization parameter goes to
zero for a fixed number of observations (upper left, Figure
\ref{fig:regularization_and_data_robustness}). However, the
combination of small regularization parameter and small number of
observations corresponds to the under-regularized regime, which we
would not find ourselves in for an appropriately regularized problem.
As the number of observations increases, the performance of the method
improves in the small regularization regime while slightly worsening
in the large regularization (over-regularized) regime, as suggested by
our theory. This behavior is consistent with a data scalable method:
one can take small values for the regularization parameter if that
choice is supported by the data available in the problem. 

\section{Conclusion}

Traditional methods for solving linear inverse problems either scale
poorly with increasing data and decreasing regularization, or are
restricted to specific forms of regularization that may not be
appropriate for the inverse problem at hand, or apply only to very
specific problems. To overcome these limitations, we proposed a
preconditioner based on a block diagonal approximation to the
augmented Lagrangian KKT operator. We proved bounds on the condition
number of the preconditioned system in an abstract setting,
specialized the analysis to the case of source inversion problems with
spectral filtering regularization, and tested the preconditioner
numerically on a Poisson source inversion problem with highly
informative data and small regularization parameter. Our analysis and
numerical results indicate that the preconditioner is mesh and data
scalable when the regularization does not over-penalize highly
informed parameter modes and does not under-penalize uninformed
modes.


\section*{Acknowledgements}

We thank James Martin and Toby Isaac for helpful discussions, and James Martin for editing suggestions on an early draft of this paper. We thank the anonymous reviewers for their helpful comments. We would in particular like to thank one of the reviewers for bring the improved Brezzi theory bound in \cite{Krendl13} to our attention, as this allowed the constant in Theorem \ref{thm:cond_bound} to be decreased from $2 + 2\sqrt{2}$ to $3$.

\FloatBarrier

\bibliographystyle{siamplain}
\bibliography{ccgo}
\end{document}